\theoremstyle{plain}
\newtheorem{theorem}{Theorem}[section]
\newtheorem{proposition}{Proposition}[section]
\theoremstyle{definition}
\theoremstyle{remark}
\newtheorem{remark}{Remark}[section]
\theoremstyle{example}
\numberwithin{equation}{section}
\newcommand{\e}{\varepsilon}
\renewcommand{\u}{{\bf u}}
\newcommand{\x}{{\bf x}}
\newcommand{\R}{{\mathbb R}}
\newcommand{\f}{{\bf f}}
\renewcommand{\v}{{\bf v}}
\newcommand{\ee}{{\bf e}}
\newcommand{\kk}{{\bf k}}
\newcommand{\w}{{\bf w}}
\newcommand{\y}{{\bf y}}
\newcommand{\varphib}{\boldsymbol{\varphi}}
\newcommand{\mmu}{{\boldsymbol{\mu}}}
\newcommand{\TP}{{\mathbb T_P}}
\def\be{\begin{equation}}
\def\ee{\end{equation}}
\def\bes{\begin{equation*}}
\def\ees{\end{equation*}}
\def\bc{\begin{cases}}
\def\ec{\end{cases}}
\begin{document}
\title[Kolmogorov's Theory and Inviscid Limit]
{Kolmogorov's Theory of Turbulence and Inviscid Limit of
the Navier-Stokes Equations in $\R^3$}

\author{Gui-Qiang Chen \and James Glimm}
\address{Gui-Qiang G. Chen, Mathematical Institute, University of Oxford,
         Oxford, OX1 3LB, UK;  School of Mathematical Sciences, Fudan University,
 Shanghai 200433, China; Department of Mathematics, Northwestern University,
         Evanston, IL 60208-2730, USA}
\email{chengq@maths.ox.ac.uk}
\address{James Glimm, Department of Applied Mathematics and Statistics,
         Stony Brook University, Stony Brook, NY 11794-3600, USA}
\email{glimm@ams.sunysb.edu}

\subjclass[2010]{76D05, 35Q30, 76D09, 76F02, 65M12}
\date{\today}

\keywords{Inviscid limit, convergence, Navier-Stokes equations, vanishing
viscosity, Kolmogorov's hypotheses, turbulence, equicontinuity, existence,
weak solutions, Euler equations, numerical convergence.}
\maketitle

\begin{abstract}
We are concerned with the inviscid limit of the Navier-Stokes equations
to the Euler equations in $\R^3$.
We first observe that a pathwise Kolmogorov hypothesis implies
the uniform boundedness of the $\alpha^{th}$-order
fractional derivative of the velocity for some $\alpha>0$ in the space variables in $L^2$, which is
independent of the viscosity $\mu>0$.
Then it is shown that this key observation yields the $L^2$-equicontinuity in the time
and the uniform bound in $L^q$, for some $q>2$, of the velocity
independent of $\mu>0$.
These results lead to the strong convergence of solutions of the Navier-Stokes equations
to a solution of the Euler equations in $\R^3$.
We also consider passive scalars coupled to the incompressible
Navier-Stokes equations and, in this case, find the weak-star convergence
for the passive scalars with a limit in the form of a Young measure (pdf
depending on space and time).
Not only do we offer a framework for mathematical existence theories,
but also we offer a framework for the interpretation of numerical
solutions through the identification
of a function space in which convergence should
take place, with the bounds that are independent of $\mu>0$, that is in the high
Reynolds number limit.
\end{abstract}
\maketitle

\section{Introduction}

Consider the incompressible Navier-Stokes equations in $\R^3$:
\begin{equation}\label{eq:ns}
\left\{\begin{aligned}
&\partial_t\u+\nabla\cdot(\u\otimes \u)+\nabla p=\mu \Delta \u +\f,\\
&\nabla\cdot \u=0,
\end{aligned}\right.
\end{equation}
with Cauchy data:
\begin{equation}\label{1.2}
\u|_{t=0}=\u_0(\x),
\end{equation}
where $\u$ is the fluid velocity, $p$ is the pressure, $\mu>0$ is the viscosity,
$\nabla$ is the gradient with respect to the space variable $\x\in\R^3$,
$\u\otimes\u=(u_iu_j)$ is the $3\times 3$ matrix for $\u=(u_1, u_2, u_3)$,
and $\f=\f(t,\x)$ is a given external force.

The global existence theory for the Cauchy problem \eqref{eq:ns}--\eqref{1.2}
was first established by J. Leray  \cite{Leray1,Leray2,Leray3};
also see Hopf \cite{Hopf}, Temam \cite{Temam},
J.-L. Lions \cite{JLions}, P.-L. Lions \cite{PLions},
and  the references cited therein.
For clarity of presentation, we focus on periodic solutions with
period $\TP=[-P/2, P/2]^3\subset\R^3, P>0$,
that is,
$$
\u^\mu(t,\x+P{\bf e}_i)=\u^\mu(t,\x)
$$
with $({\bf e}_i)_{1\le i\le 3}$ the canonic basis in $\R^3$. Other cases
can be analyzed correspondingly.
We always assume that $\f\in L^2_{loc}(0, \infty; L^2(\TP))$ is periodic
in $\x$ with period $\TP$.

\begin{theorem}
Let $\u_0\in L^2(\TP)$ be periodic in $\x\in\R^3$ with period $\TP$.
Then, for any $T>0$, there exists a periodic weak solution $\u^\mu=\u^\mu(t,\x)$
with period $\TP$, along with a corresponding periodic pressure field $p^\mu(t,\x)$,
of \eqref{eq:ns}--\eqref{1.2}  such that the equations in \eqref{eq:ns}
hold in the sense of distributions in $\R_T^3:=[0, T)\times \R^3$,
and the following properties hold:
\begin{eqnarray}
&& \u^\mu\in L^2(0,T; H^1)\cap C([0, T]; L_w^2)\cap  C([0,T]; L^{s_1}),\\
&&\partial_t \u^\mu \in L^2(0,T; H^{-1})
+\big(L^{s_2}(0,T; W^{-1, \frac{3s_2}{3s_2-2}})\cap L^q(0,T; L^r)\big),\qquad\\
&&p^\mu\in L^2((0, T)\times \TP)+L^{s_2}(0,T; L^{\frac{3s_2}{3s_2-2}}),\\
&&\nabla p^\mu\in L^2(0,T; H^{-1})+L^q(0,T; L^r),
\end{eqnarray}
where $1\le s_1<2, 1\le s_2<\infty$, $1\le q<2$, and $r=\frac{3q}{2(2q-1)}$;
and, in addition,
\begin{equation}\label{energy-ineq}
\partial_t(\frac{1}{2}|\u^\mu|^2)+\nabla\cdot\big(\u^\mu (\frac{1}{2}|\u^\mu|^2+p^\mu)\big)
+\mu |\nabla\u^\mu|^2-\mu \Delta(\frac{|\u^\mu|^2}{2})\le \f\cdot \u^\mu
\end{equation}
in the sense of distributions in $\R_T^3$.
\end{theorem}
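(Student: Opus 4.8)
The plan is to construct $\u^\mu$ by a viscous regularization and compactness. For definiteness I would use a Leray-type mollification of the convective term — replacing $\nabla\cdot(\u\otimes\u)$ by $\nabla\cdot\big((\psi_\e\ast\u)\otimes\u\big)$, where $\psi_\e$ is a smooth mollifier (a retarded one, in the spirit of Caffarelli--Kohn--Nirenberg, to make the regularized system cleanly solvable) — which produces smooth approximate solutions $\u^\mu_\e$. Testing the regularized momentum equation against $\u^\mu_\e$: incompressibility kills the pressure term, the regularized convective term integrates to zero since $\nabla\cdot(\psi_\e\ast\u^\mu_\e)=0$, and Young's inequality absorbs $\int\f\cdot\u^\mu_\e$; integrating in time and applying Gr\"onwall then gives, for each fixed $\mu>0$ and $T>0$, a bound for $\u^\mu_\e$ in $L^\infty(0,T;L^2)\cap L^2(0,T;H^1)$ depending only on $\|\u_0\|_{L^2}$, $\|\f\|_{L^2((0,T)\times\TP)}$, $\mu$ and $T$, hence uniform in $\e$. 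Since $H^1(\TP)\hookrightarrow L^6(\TP)$ in $\R^3$, Gagliardo--Nirenberg interpolation upgrades this to a uniform bound in $L^{q_0}(0,T;L^{p_0})$ for every $(q_0,p_0)$ with $\tfrac{2}{q_0}+\tfrac{3}{p_0}=\tfrac32$, $2\le p_0\le 6$; in particular $\u^\mu_\e$ is bounded in $L^{10/3}((0,T)\times\TP)$.

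\emph{Estimates for $\partial_t\u^\mu$ and $p^\mu$.} I would record the regularized nonlinearity in two forms. As $(\psi_\e\ast\u^\mu_\e)\cdot\nabla\u^\mu_\e$, H\"older's inequality applied to the product of $\psi_\e\ast\u^\mu_\e\in L^{q_0}(L^{p_0})$ with $\nabla\u^\mu_\e\in L^2(L^2)$ gives a uniform bound in $L^q(0,T;L^r)$ with $r=\tfrac{3q}{2(2q-1)}$, $1\le q<2$; as $\nabla\cdot\big((\psi_\e\ast\u^\mu_\e)\otimes\u^\mu_\e\big)$ with the product in $L^{s_2}(L^{3s_2/(3s_2-2)})$, it is uniformly bounded in $L^{s_2}\big(0,T;W^{-1,3s_2/(3s_2-2)}\big)$, $1\le s_2<\infty$. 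Applying the Helmholtz--Leray projection to the momentum equation splits $\partial_t\u^\mu_\e$ into the viscous-and-forcing part — which lies in $L^2(0,T;H^{-1})$, using $\f\in L^2(L^2)\hookrightarrow L^2(H^{-1})$ — and the projected nonlinear term, which inherits both bounds since the projection is a Calder\'on--Zygmund operator. Taking instead the divergence of the momentum equation and using $\nabla\cdot\u^\mu_\e=0$ produces an elliptic identity $-\Delta p^\mu_\e=\partial_i\partial_j\big((\psi_\e\ast\u^\mu_\e)_i(\u^\mu_\e)_j\big)-\nabla\cdot\f$ for the mean-zero pressure on $\TP$; inverting $\Delta$ and invoking the Calder\'on--Zygmund estimate on the torus turns these bounds together with the bound on $\f$ into the asserted estimates for $p^\mu$ and $\nabla p^\mu$. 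The precise exponents $3s_2/(3s_2-2)$ and $3q/(2(2q-1))$ emerge from this interpolation and Calder\'on--Zygmund bookkeeping, which I regard as routine.

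\emph{Compactness and the limit.} The uniform bounds above and the Aubin--Lions--Simon lemma — with $H^1\hookrightarrow\hookrightarrow L^2$ compactly and $L^2$ continuously into a fixed negative-order space containing $\partial_t\u^\mu_\e$ — produce a subsequence converging to some $\u^\mu$ strongly in $L^2(0,T;L^2(\TP))$, weakly in $L^2(0,T;H^1)$ and weakly-$\ast$ in $L^\infty(0,T;L^2)$. Interpolating the strong $L^2$ convergence against the uniform $L^{10/3}$ bound upgrades it to strong convergence in $L^s((0,T)\times\TP)$ for every $s<\tfrac{10}{3}$, in particular $s=3$; since $\psi_\e\ast\u^\mu_\e$ then converges strongly to $\u^\mu$ in $L^3((0,T)\times\TP)$ as well, the regularized convective term converges to $\nabla\cdot(\u^\mu\otimes\u^\mu)$, the linear terms pass by weak convergence, and $p^\mu_\e\to p^\mu$ follows from the limiting elliptic identity. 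Hence $\u^\mu$ solves \eqref{eq:ns}--\eqref{1.2} in the sense of distributions in $\R_T^3$ and carries all of the listed norms. Finally, $\u^\mu\in C([0,T];L^2_w)$ follows from the $L^\infty(0,T;L^2)$ bound together with the $\partial_t\u^\mu$ estimate (so that $t\mapsto\langle\u^\mu(t),\varphi\rangle$ is a uniform limit of continuous functions as $\varphi$ ranges over a dense subset of $L^2$), and the strong continuity $\u^\mu\in C([0,T];L^{s_1})$ for $s_1<2$ then follows by the classical interpolation argument of Lions and Temam.

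\emph{Local energy inequality; the main obstacle.} Because the $\u^\mu_\e$ are smooth and satisfy their regularized momentum equation exactly, testing that equation against $\u^\mu_\e$ — using that $\psi_\e\ast\u^\mu_\e$ is divergence-free — produces the \emph{local} energy identity having the form of \eqref{energy-ineq} with the mollification in place. In the limit, each quadratic or cubic term ($|\u^\mu_\e|^2$, $\u^\mu_\e|\u^\mu_\e|^2$, $\u^\mu_\e p^\mu_\e$) converges against a nonnegative test function via the strong convergence of $\u^\mu_\e$ in $L^2$ and $L^3$ and the weak convergence of $p^\mu_\e$ paired with the strongly convergent $\u^\mu_\e$, whereas the dissipation obeys only $\iint\mu|\nabla\u^\mu|^2\,\varphi\le\liminf_{\e\to0}\iint\mu|\nabla\u^\mu_\e|^2\,\varphi$ for $\varphi\ge0$ by weak lower semicontinuity of the $L^2$-norm — which is exactly why \eqref{energy-ineq} is an inequality, not an identity. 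I expect the real difficulty to lie here and in the previous step: carrying out all nonlinear passages to the limit in the low-integrability regime dictated by $\u^\mu\in L^{10/3}$ (so that they must go through the interpolated strong convergence, not through any $H^1$-bound), and choosing the regularization so that the \emph{local} energy identity — not merely its integrated form — survives the limit, which is precisely the point at which a Leray-type (retarded) mollification of the convective term, rather than a bare Galerkin truncation, is needed.
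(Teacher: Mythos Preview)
The paper does not actually prove this theorem. Theorem~1.1 is stated as a known result and attributed to the classical literature: Leray, Hopf, Temam, J.-L.~Lions, and P.-L.~Lions are cited immediately before the statement, and the only comment afterwards is a remark on the meaning of $C([0,T];L^2_w)$ together with a pointer to further references. There is no proof environment and no argument in the paper to compare against.

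Your sketch is a faithful outline of the standard construction, essentially the one in P.-L.~Lions' book (from which the precise exponents $3s_2/(3s_2-2)$ and $r=3q/(2(2q-1))$ are taken) combined with the Caffarelli--Kohn--Nirenberg retarded-mollifier regularization needed to obtain the \emph{local} energy inequality~\eqref{energy-ineq} rather than merely the integrated one. The bookkeeping of exponents is correct: interpolation of $L^\infty(L^2)\cap L^2(L^6)$ gives $\u\in L^{q_0}(L^{p_0})$ with $2/q_0+3/p_0=3/2$, and your two ways of reading the nonlinearity then produce exactly the stated spaces for $\partial_t\u^\mu$, $p^\mu$, and $\nabla p^\mu$. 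Your identification of the decisive point --- that the dissipation term survives the limit only as a $\liminf$, forcing the inequality sign in~\eqref{energy-ineq}, and that a Galerkin truncation would not deliver the local form --- is also correct. In short, there is nothing to compare: the paper takes the theorem as input, and your proposal supplies a sound version of the proof the cited references contain.
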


In Theorem 1.1, $\v\in C([0,T]; L_w^2(\TP))$ means that $\v\in L^\infty(0, T; L^2(\TP))$ and
$\v$ is continuous in $t$ with values in $L^2(\TP)$ endowed with the weak topology.
Some further a priori estimates and properties of solutions to the Navier-Stokes equations \eqref{eq:ns}
can be found in \cite{Const90,FMRT,PLions} and the references cited therein.

\medskip
In contrast, less is known
regarding the existence theory for the Euler equations
(defined with $\mu = 0$ in (\ref{eq:ns})). For the compressible case,
the analysis of
\cite{GW,Neustupa}
gives convergence subsequences, but to a
very weak limit as a measure-valued vector function. Moreover, this limit
is not shown to satisfy the original equations, in that the interchange
of limits with nonlinear terms in the equations is not justified in this
analysis. On this basis, we state that the
existence of solutions for the Euler equations in $\R^3$ is open as is the
convergence of the inviscid limit from the Navier-Stokes to the Euler equations.
On the other hand, the Euler equations are fundamental for turbulence; see Constantin
\cite{Const} and the references cited therein.

\medskip
The purpose of this paper is to establish a framework for the existence
theory for the Euler equations. We introduce a physically well accepted
pathwise hypothesis by Kolmogorov \cite{Kolmogorov1,Kolmogorov2}, \textit{Assumption (K41)}.
One of our main contributions in this paper is our key observation that
\textit{Assumption (K41)}, even a weaker version \textit{Assumption (K41w)},
assures sufficient regularity of the solutions to the Navier-Stokes
equations with an external force that convergence through a subsequence to solutions of the
Euler equations is guaranteed.

As is well-known, there are two types of turbulence: driven turbulence by a forcing function
and transient turbulence by (strong) initial conditions. The same flow
(in a turbulent wind tunnel or turbulent flow in a pipe) could be
of either type depending on how the system is modeled:
If the pipe is considered in isolation, the flow might be forced;
if the force is a flow connection from a reservoir to the wind tunnel
or pipe, and the reservoir is part of the model,
then the turbulence arises from initial conditions, and the turbulence
will die out when the reservoir is exhausted and no longer drives the
flow. Thus, the distinction between the two (transient and driven turbulence)
is to some extent a matter of points of view and of modeling convenience.
In this paper, our framework is focused on transient turbulence, though the
forced turbulence is also included.
For some recent developments in the mathematical study of energy dissipation
in body-forced turbulence, see Constantin-Doering \cite{CD},
Doering-Foias \cite{DF}, and the references cited therein.

Whenever a mathematically precise formulation of a physically precise idea
is attempted, ambiguities may arise. In the present case, the assumptions
concern the rate $\epsilon$ of dissipation of kinetic energy, with
a fundamental definition which depends on the viscosity, as the ultimate source
of energy dissipation.  The essence of \textit{Assumption (K41)},
as stated in a physical language, is that in the inertial range,
the energy is transferred from larger to smaller length scales (eddies)
in a manner which is independent of viscosity, because all aspects
of the inertial range are assumed to be independent of viscosity.
The energy dissipation occurs only at smaller scales, i.e.,
below the Kolmogorov scale, and even there, it is limited by the energy
arriving at these scales through the energy cascade.
Given that the energy dissipation rate is supposed to be independent of viscosity,
the question remains as to which statistical ensemble or in what norm or function space
to express this property.
Here the physical literature is not a good guide,
and we introduce the assumptions sufficient to allow our proofs to go forward.
The distinction among time averages, spatial averages at a fixed time,
space-time averages
and ensemble averages relates to the well known ergodic hypothesis
and is out of the scope of the present paper.

\medskip
Mathematically, our result has the
status of an informed conjecture and mathematically rigorous
consequences of this conjecture. Numerical analysts may find
the framework useful, in view of the many difficulties involved in
assessing convergence of numerical simulations of
turbulent and turbulent mixing flows.
We expect that many physicists will probably
accept the conclusions as being correct,
even if unproven mathematically. There has
been some discussion regarding the exponent $5/3$ which occurs in
\textit{Assumption (K41)}.
We note that the main results (if not the detailed estimates) are
not sensitive to this specific number, and corrections (as conventionally
understood) to it due to intermittency should not affect our result.
In fact, our rigorous argument works for an even weaker version, \textit{Assumption (K41w)},
for any $\beta>0$.

Not only do we offer a framework for mathematical existence theories,
but also we offer a framework for the interpretation of numerical solutions
of (\ref{eq:ns}). Only for very modest problems and with the largest
computers can converged solutions of (\ref{eq:ns}) be achieved. These
solutions are called direct numerical solutions (DNS). For most solutions
of interest to science or engineering, the large eddy simulations
(LES) or Reynolds Averaged Navier-Stokes (RANS) simulations are required.
We discuss here the more accurate LES methodology. Briefly, it
employs a numerical grid which will resolve some but not all of the
turbulent eddies. The smallest of those, below the level of the grid
spacing, are not resolved. However, either (a) the equations in (\ref{eq:ns})
are modified with additional subgrid scale (SGS) terms to model the
influence of the unresolved scales on those that are being computed
or (b) the numerical algorithm is modified in some manner to
accomplish this effect in some other way. The present article contributes
to this analysis through the introduction of a function space in
which convergence should take place, with bounds that are independent
of $\mu$, that is in the high Reynolds number limit.

Because of the common occurrence of high Reynolds numbers in flows of
practical and scientific interest and the need to perform LES simulations
to achieve scientific understanding and engineering designs, we observe
that existence theories for the Euler equations are relevant to the
mathematical theories of numerical analysis.

\medskip
\section{The Kolmogorov  Hypotheses (1941)}

\medskip
By the definition of weak solutions, we know that, for any $T>0$,
there exists $C_T>0$ independent of $\mu$ such that
\begin{eqnarray}\label{energy-estimate}
&&\|\u^\mu-\bar{\u}\|_{L^\infty (0,T; L^2(\TP))}^2
+\|\sqrt{\mu}\nabla \u^\mu\|_{L^2([0,T)\times\TP)}^2\nonumber\\
&&\le C_T\big(\|\u_0-\bar{\u}\|_{L^2(\TP)}^2 +\|\f-\bar{\f}\|_{L^2([0,T)\times\TP)}^2\big),
\label{energy-estimate}
\end{eqnarray}
where
$$
\bar{\u}(t)=\frac{1}{|\TP|}\int_\TP\u^\mu(t,\x)d\x=\frac{1}{|\TP|}\int_\TP\u_0(\x)d\x +\int_0^t\bar{\f}(s)ds
$$
and
$$
\bar{\f}(t)=\frac{1}{|\TP|}\int_\TP\f(t,\x)d\x
$$
are independent of $\mu$.
Without loss of generality, we assume
that the mean velocity and pressure are zero, and interpret $\u^\mu$
as the fluctuating velocity.
Then the total energy $\mathcal{E}(t)$ per unit mass at time $t$
for isotropic turbulence is:
\begin{equation}
\mathcal{E}(t)=\frac{1}{2\TP}\int_{\TP}|\u^\mu(t,\x)|^2d\x
=\sum_{k\ge 0} E(t,k)
=\sum_{k\ge 0} 4\pi q(t,k)k^2.
\end{equation}
Here $E(t,k), k=|\kk|$, is the energy wavenumber spectrum, $q(t,k)$ can be interpreted as
the density of contributions in wavenumber space
to the total energy, which is sometimes called the spectral density,
and $\kk=(k_1, k_2, k_3)=\frac{2\pi}{P}(n_1,n_2,n_3)\in \R^3$, with $n_j=0, \pm 1, \pm2, \cdots$, and $j=1,2,3$,
is the discrete wavevector in the Fourier transform:
$$
\hat{\u}(t,\kk)=\frac{1}{|\TP|}\int_{\TP}\u(t,\x)e^{-i\kk\cdot\x}d\x
$$
of
the velocity $\u(t,\x)$ in the $\x$-variable.
Then
$$
\u(t,\x)=\sum_{\kk}\hat{\u}(t,\kk)e^{i\kk\cdot\x}
$$

\medskip
Kolmogorov's two assumptions in his description of isotropic turbulence
in Kolmogorov \cite{Kolmogorov1,Kolmogorov2} (also see  McComb \cite{McComb})
are:

\medskip
\begin{enumerate}
\item[(i)] At sufficiently high wavenumbers, the energy spectrum $E(t,k)$, can
depend only on the fluid viscosity $\mu$, the dissipation rate $\varepsilon$,
and the wavenumber $k$ itself.

\item[(ii)] $E(t,k)$ should become \textit{independent
of the viscosity} as the Reynolds number tends to infinity:

\begin{equation}\label{2.2a}
E(t, k)\approx  \alpha \varepsilon^{2/3}k^{-5/3}
\end{equation}
in the limit of infinite Reynolds number, where $\alpha$ may depend on $t$, but is independent
of $k$ and $\varepsilon$.
\end{enumerate}

\smallskip
For general turbulence, the energy wavenumber spectrum $E(t,k)$ in \eqref{2.2a}
may be replaced by $E(t, \kk)=E(t, k, \phi, \theta)$ in the spherical
coordinates $(k, \phi, \theta)$, $0\le \phi\le \pi$,
$0\le \theta \le 2\pi$, in the $\kk$-space,
but it should be in the same asymptotics as in \eqref{2.2a}
for sufficiently high wavenumber $k=|\kk|$.

\medskip
For the remainder of this paper, we assume these two hypotheses,
which we interpret in mathematical terms as a pathwise Kolmogorov hypothesis:

\bigskip
{\bf Assumption (K41):} {\it For any $T>0$, there exists $C_T>0$ and $k_*$
(sufficiently large) depending on $\u_0$ and $\f$ but
independent of the viscosity $\mu$ such that,
for $k=|\kk|\ge k_*$,
\begin{equation} \label{assumption}
\int_0^T E(t, \kk)dt \le C_T k^{-5/3}.
\end{equation}
}

\medskip
We should point out that the constant $C_T=C_T(\u_0, \f)$ depends on the
maximum time $T$, the initial data $\u_0$, and the external force $\f$,
but is independent of the viscosity $\mu>0$.
For given the maximum time $T$, the dependence on $\u_0$ and $\f$ of the constant $C_T$ is from the control
of the upper bound of $\int_0^T \e^{2/3}(s)ds$, with $\e=\e(t)$ being the rate of energy dissipation,
by the initial data and the external force, more specifically by $\|\u_0\|_{L^2(\TP)}$ and $\|\f\|_{L^2([0,T)\times\TP)}$.
The dimensionless version of the bound follows from \eqref{2.2a}.

\bigskip
For our analysis, the following weaker version of {\it Assumption (K41)} is sufficient:

\medskip
{\bf Assumption (K41w):}
{\it For any $T>0$, there exists $C_T=C_T>0$ and $k_*$
(sufficiently large) depending on $\u_0$ and $\f$ but
independent of the viscosity $\mu$ such that,
for $k=|\kk|\ge k_*$,
\begin{equation} \label{assumption}
\sup_{k\ge k_*}\Big(|\kk|^{3+\beta}\int_0^T|\hat{\u}(t, \kk)|^2dt\Big)
\le C_T \qquad\quad\mbox{for some $\beta>0$}.
\end{equation}
}

\medskip
A mathematical proof of \textit{Assumption (K41)}
may well depend on developing a mathematical
version of the renormalization group. The renormalization group
has proved to be very powerful in theoretical physics calculations.
The basic idea is to integrate differentially a segment of the
problem, say from $\kk$ to $\kk - d\kk$, following by a rescaling of all
variables, so that all variables are redefined to be integrated
through $\kk$ only. The integration is quite a messy operation,
but in the rescaling, it is important to examine the scaling dimensions
of all terms. Most of them get smaller under rescaling and are called inessential.
The few that are preserve their magnitude are called essential and
serve to define the key parameters of the problem. These must be reset
(renormalized) to their observed (interaction) values after the integration.
Since the method is generally applied to self-similar problems, the
result of integration does not change the form of the equations.
That is, it looks the same at $\kk$ as at $\kk - d\kk$ rescaled back to $\kk$.
Thus, one has a kind of group operation, if the steps are discrete, and
a differential equation, if the steps, as the present notation suggests,
are infinitesimal. In either case, the desired solution is a fixed point.

\medskip
\section{$L^2$--Equicontinuity of the Velocity in the Space Variables,
Independent of the Viscosity}

\medskip
In this section we show that the pathwise Kolmogorov hypothesis,
\textit{Assumption (K41w)}, implies
a uniform bound of the velocity $\u^\mu(t,\x)$ in $L^2(0,T; H^\alpha(\TP))$ for any
$\alpha\in (0, \beta/2)$, especially the uniform equicontinuity
of the velocity $\u^\mu(t,\x)$ in the space variables in $L^2([0,T)\times \TP)$,
independent of $\mu>0$.

\begin{proposition}\label{prop:3.1}
Under {\rm Assumption (K41w)}, for any $T\in (0, \infty)$,
there exists $M_T>0$ depending on $\u_0, \f$, and $T>0$, but independent of $\mu>0$,
such that
\begin{equation}\label{3.1}
\|\u^\mu\|_{L^2(0,T; H^{\alpha}(\TP))}\le M_T<\infty,
\end{equation}
where $\alpha \in (0, \beta/2)$.
\end{proposition}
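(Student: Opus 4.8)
The plan is to express the $H^\alpha$-norm of $\u^\mu(t,\cdot)$ in terms of its Fourier coefficients and then integrate in time, splitting the wavenumber sum into low modes $|\kk| < k_*$ and high modes $|\kk| \ge k_*$. By Parseval's identity for periodic functions,
\begin{equation*}
\|\u^\mu(t,\cdot)\|_{H^\alpha(\TP)}^2 \sim \sum_{\kk} (1+|\kk|^2)^\alpha |\hat{\u}^\mu(t,\kk)|^2,
\end{equation*}
so that
\begin{equation*}
\|\u^\mu\|_{L^2(0,T;H^\alpha(\TP))}^2 \sim \sum_{\kk} (1+|\kk|^2)^\alpha \int_0^T |\hat{\u}^\mu(t,\kk)|^2\,dt.
\end{equation*}
First I would handle the high-frequency tail: for $|\kk| \ge k_*$, Assumption (K41w) gives $\int_0^T |\hat{\u}^\mu(t,\kk)|^2\,dt \le C_T |\kk|^{-(3+\beta)}$, so the corresponding part of the sum is bounded by $C_T \sum_{|\kk| \ge k_*} (1+|\kk|^2)^\alpha |\kk|^{-(3+\beta)}$. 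Since the number of lattice points $\kk = \frac{2\pi}{P}(n_1,n_2,n_3)$ with $|\kk| \sim k$ grows like $k^2$, this sum behaves like $\int_{k_*}^\infty k^{2\alpha - (3+\beta)} k^2\, dk = \int_{k_*}^\infty k^{2\alpha - 1 - \beta}\,dk$, which converges precisely when $2\alpha - \beta < 0$, i.e. $\alpha < \beta/2$ — exactly the stated range. This bound is independent of $\mu$ since $C_T$ and $k_*$ are.

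Next I would control the finitely many low modes $|\kk| < k_*$. Here $(1+|\kk|^2)^\alpha$ is bounded by a constant depending only on $k_*$ (hence on $\u_0, \f, T$), and $\sum_{|\kk| < k_*} \int_0^T |\hat{\u}^\mu(t,\kk)|^2\,dt \le \int_0^T \|\u^\mu(t,\cdot)\|_{L^2(\TP)}^2\,dt \le T\,\|\u^\mu\|_{L^\infty(0,T;L^2(\TP))}^2$, which is bounded uniformly in $\mu$ by the energy estimate \eqref{energy-estimate} in terms of $\|\u_0\|_{L^2(\TP)}$ and $\|\f\|_{L^2([0,T)\times\TP)}$. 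Combining the two pieces gives \eqref{3.1} with $M_T$ depending only on $\u_0, \f, T$, $k_*$, and $\alpha$, but not on $\mu$.

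I do not expect a serious obstacle here; the argument is essentially a careful bookkeeping of Fourier-side estimates plus the standard energy bound. The one point requiring minor care is making the counting of lattice points in dyadic shells precise (to justify the comparison of the discrete sum with the integral $\int k^{2\alpha-1-\beta}\,dk$), and ensuring the convergence of the tail is strict, so that the endpoint $\alpha = \beta/2$ is correctly excluded. One should also note that (K41w) as stated bounds $\sup_{k \ge k_*} |\kk|^{3+\beta}\int_0^T |\hat{\u}(t,\kk)|^2\,dt$, which is slightly stronger pointwise-in-$\kk$ information than needed, so no summation over the shell at fixed radius is lost. That is all that is required.
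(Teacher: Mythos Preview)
Your proposal is correct and follows essentially the same route as the paper's proof: split the Fourier sum at $k_*$, use the energy estimate \eqref{energy-estimate} for the low modes, and use Assumption~(K41w) together with the $k^2$ lattice-point count to reduce the high-mode contribution to a convergent series $\sum_{k\ge k_*} k^{2\alpha-1-\beta}$ when $\alpha<\beta/2$. The only cosmetic difference is that the paper works with the homogeneous weight $|\kk|^{2\alpha}$ rather than $(1+|\kk|^2)^\alpha$, which is immaterial.
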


\begin{proof} Using the definition of fractional derivatives via the Fourier transform,
the Parseval identity, and Assumption (K41w), i.e., \eqref{assumption},
we have
\begin{eqnarray*}
&&\int_0^T\int_{\TP} |D^\alpha_\x \u^\mu(t,\x)|^2 d\x dt\\
&&\le C_1\int_0^T \Big(\sum_{\kk} |\kk|^{2\alpha}|\hat{\u^\mu}(t,\kk)|^2 \Big) dt\\
&&=C_1\int_0^T \Big(\sum_{0\le|\kk|\le k_*} |\kk|^{2\alpha}|\hat{\u^\mu}(t,\kk)|^2 \Big) dt
   +C_1\int_0^T \Big(\sum_{|\kk|>k_*} |\kk|^{2\alpha}|\hat{\u^\mu}(t,\kk)|^2 \Big) dt\\
&&\le C_1 k_*^{2\alpha}\int_0^T \Big(\sum_{0\le|\kk|\le k_*}|\hat{\u^\mu}(t,\kk)|^2 \Big) dt
   +C_2\sum_{|\kk|\ge k_*} |\kk|^{2\alpha-3-\beta}\\
&&\le C_1 k_*^{2\alpha}\int_0^T \Big(\sum_{\kk}|\hat{\u^\mu}(t,\kk)|^2 \Big) dt
   +C_3\sum_{k\ge k_*} k^{2\alpha-1-\beta}\\
&&\le C_1k_*^{2\alpha}\int_0^T\int_{\TP}|\u^\mu(t,\x)|^2d\x dt
   +C_4 k_*^{2\alpha-\beta}\\
&&\le C_5k_*^{2\alpha}T\Big(\int_{\TP}|\u_0(\x)|^2d\x+\int_0^T\int_{\TP}|\f(t,\x)|^2d\x dt\Big)
+C_4 k_*^{2\alpha-\beta}\\
&&\le M(T, k_*, \alpha)<\infty,
\end{eqnarray*}
since  $\alpha< \beta/2$, which $C_j, j=1, \cdots, 5$, are the constants independent of $\mu$.
This completes the proof.
\end{proof}

Proposition 3.1 directly yields the uniform equicontinuity of $\u^\mu(t,\x)$
in $\x$ in $L^2([0, T)\times\TP)$ independent of $\mu>0$.

\medskip
\section{$L^2$--Equicontinuity of the Velocity in the Time-Variable,
Independent of the Viscosity}

In this section, we show that Proposition 3.1 implies the uniform equicontinuity
of the velocity in the time variable $t>0$ in $L^2$, independent of $\mu>0$.

\begin{proposition}\label{prop:4.1}
For any $T>0$, there exists $M_T>0$ depending on $\u_0, \f$, and $T>0$, but
independent of $\mu>0$, such that,
for all small $\triangle t>0$,
\begin{equation}\label{4.1}
\int_0^{T-\triangle t}\int_{\TP} |\u^\mu (t+\triangle t, \x)-\u^\mu(t,\x)|^2d\x dt
\le M_T(\triangle t)^{\frac{2\alpha}{5+2\alpha}}\to 0 \qquad \mbox{as}\,\,
\triangle t\to 0.
\end{equation}
\end{proposition}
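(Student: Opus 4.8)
The plan is to split the time-increment in two regimes according to whether it is resolved or smoothed by the available spatial regularity, mollifying in space at a scale $\delta$ to be optimized against $\triangle t$. First I would introduce a spatial mollifier $J_\delta$ (or equivalently a Fourier cutoff at wavenumber $|\kk|\le 1/\delta$) and write
\begin{equation}\label{eq:split}
\u^\mu(t+\triangle t,\cdot)-\u^\mu(t,\cdot)
=\big(\u^\mu(t+\triangle t,\cdot)-J_\delta\u^\mu(t+\triangle t,\cdot)\big)
+\big(J_\delta\u^\mu(t+\triangle t,\cdot)-J_\delta\u^\mu(t,\cdot)\big)
+\big(J_\delta\u^\mu(t,\cdot)-\u^\mu(t,\cdot)\big).
\end{equation}
For the first and third terms, the standard mollification estimate together with Proposition~\ref{prop:3.1} gives, after integrating in $t$,
\begin{equation}\label{eq:moll}
\|\u^\mu(s,\cdot)-J_\delta\u^\mu(s,\cdot)\|_{L^2(0,T;L^2(\TP))}^2
\le C\,\delta^{2\alpha}\|\u^\mu\|_{L^2(0,T;H^\alpha(\TP))}^2
\le C M_T^2\,\delta^{2\alpha},
\end{equation}
which is the content of the uniform $L^2$--$H^\alpha$ bound and is clean.

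The middle term is where the equation enters. I would write $J_\delta\u^\mu(t+\triangle t,\cdot)-J_\delta\u^\mu(t,\cdot)=\int_t^{t+\triangle t}\d_s J_\delta\u^\mu(s,\cdot)\,ds$ and use \eqref{eq:ns} to replace $\d_s\u^\mu$ by $-\nabla\cdot(\u^\mu\otimes\u^\mu)-\nabla p^\mu+\mu\Delta\u^\mu+\f$. After mollification each of these terms lands back in $L^2$ with a negative power of $\delta$: the convection term $J_\delta\nabla\cdot(\u^\mu\otimes\u^\mu)$ is bounded in $L^2_x$ by $C\delta^{-1}\|\u^\mu\otimes\u^\mu\|_{L^1_x}\le C\delta^{-1}\|\u^\mu\|_{L^2_x}^2$ (using that $J_\delta\nabla\cdot$ has operator norm $O(\delta^{-1})$ from $L^1$ to $L^2$ in three dimensions, after accounting for the $\delta^{-3/2}$ from $L^1\to L^2$ and... in fact one simply bounds $\|J_\delta \nabla\cdot (\u^\mu\otimes\u^\mu)\|_{L^2}\le C\delta^{-5/2}\|\u^\mu\|_{L^2}^2$ by Young's inequality for convolutions), the viscous term $\mu J_\delta\Delta\u^\mu$ by $\mu\,C\delta^{-2}\|\u^\mu\|_{L^2_x}$, the pressure term similarly after taking divergence of the momentum equation to express $p^\mu$ through the Riesz transform of $\u^\mu\otimes\u^\mu$, and $J_\delta\f$ by $\|\f\|_{L^2_x}$. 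Since $\mu\le$ const and $\delta^{-2}\le\delta^{-5/2}$ for small $\delta$, the worst power is $\delta^{-5/2}$. Thus $\|\d_s J_\delta\u^\mu(s,\cdot)\|_{L^2_x}\le C_T\,\delta^{-5/2}\,g(s)$ with $g\in L^1(0,T)$ depending only on $\|\u_0\|_{L^2}$ and $\|\f\|_{L^2}$ via the energy estimate \eqref{energy-estimate}; by Minkowski's inequality in time and Cauchy--Schwarz in $s$,
\begin{equation}\label{eq:mid}
\int_0^{T-\triangle t}\|J_\delta\u^\mu(t+\triangle t,\cdot)-J_\delta\u^\mu(t,\cdot)\|_{L^2(\TP)}^2\,dt
\le C_T\,\delta^{-5}(\triangle t)^2.
\end{equation}

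Combining \eqref{eq:moll} and \eqref{eq:mid}, the total bound is $C_T\big(\delta^{2\alpha}+\delta^{-5}(\triangle t)^2\big)$; optimizing by choosing $\delta=(\triangle t)^{2/(5+2\alpha)}$ balances the two terms and yields exactly $M_T(\triangle t)^{2\alpha/(5+2\alpha)}$, which is the claimed estimate and tends to zero as $\triangle t\to0$. The main obstacle is bookkeeping the negative powers of $\delta$ correctly for the nonlinear and pressure terms — in particular handling the pressure, which is nonlocal, by inverting the Laplacian in the divergence of the momentum equation so that $p^\mu$ is controlled by $\|\u^\mu\otimes\u^\mu\|_{L^1_x}$ uniformly in $\mu$ (this is precisely where one uses that the mean has been normalized to zero and that Calderón--Zygmund/Riesz operators commute with $J_\delta$), and making sure the resulting time-integrability $g\in L^1(0,T)$ comes only from the $\mu$-independent energy bound so that the final constant $M_T$ is independent of $\mu$. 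Everything else is the routine mollifier calculus already implicit in Proposition~\ref{prop:3.1}.
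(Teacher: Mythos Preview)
Your argument is correct in spirit but follows a genuinely different route from the paper. The paper does not split $\u^\mu(t+\triangle t,\cdot)-\u^\mu(t,\cdot)$ as in your \eqref{eq:split}; instead it sets $\w=\u(t+\triangle t,\cdot)-\u(t,\cdot)$ and tests the equation against the \emph{divergence-free} mollification $\varphib^\delta=j_\delta*\w$, writing $\int|\w|^2=\int\w\cdot\varphib^\delta+\int\w\cdot(\w-\varphib^\delta)$. The payoff is that the pressure term drops out for free because $\nabla\cdot\varphib^\delta=0$; there is no need for Riesz transforms or Calder\'on--Zygmund theory. Your approach instead handles the pressure head-on via $-\Delta p=\partial_i\partial_j(u_iu_j)$, which works but costs an extra argument. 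The paper's duality trick yields the unsquared bound $C\triangle t/\delta^{5/2}+C\delta^\alpha$, optimizing to $(\triangle t)^{2\alpha/(5+2\alpha)}$; your direct $L^2$ splitting yields the squared bound $C(\triangle t)^2/\delta^{5}+C\delta^{2\alpha}$.

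One slip: your optimization with $\delta=(\triangle t)^{2/(5+2\alpha)}$ actually gives $(\triangle t)^{4\alpha/(5+2\alpha)}$, not $(\triangle t)^{2\alpha/(5+2\alpha)}$---so you in fact prove a stronger bound than stated, and the claim follows a fortiori. Also, your treatment of the viscous term as $\mu\,\delta^{-2}\|\u^\mu\|_{L^2}$ tacitly uses $\mu\le\text{const}$; the paper instead invokes the $\mu$-independent bound $\|\sqrt{\mu}\nabla\u^\mu\|_{L^2([0,T)\times\TP)}\le C$, which is cleaner but yields the same $O(\triangle t)$ contribution.
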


\begin{proof} For simplicity, we drop the superscript $\mu>0$ of $\u^\mu$ in
the proof.

\medskip
Fix $\triangle t>0$. For $t\in [0, T-\triangle t]$,
set
$$
\w(t,\cdot)=\u(t+\triangle t, \cdot)-\u(t, \cdot).
$$
Then, for any $\varphib(t,\x)\in C^\infty([0,T)\times\TP)$ that is periodic in $\x\in\R^3$ with
period $\TP$, we have
\begin{eqnarray}
&&\int_{\TP}\w(t,\x)\cdot \varphib(t,\x)\, d\x \nonumber\\
&&=\int_{t}^{t+\triangle t}\int_{\TP} \partial_s\u(s,\x)\cdot \varphib(t,\x)\, d\x ds\nonumber\\
&&=\int_t^{t+\triangle t}\int_\TP
 (\u\otimes\u)(s, \x): \nabla\varphib(t,\x) \, d\x ds \nonumber\\
&&\quad +\int_t^{t+\triangle t}\int_{\TP} p(s,\x) \nabla\cdot\varphib(t,\x)\, d\x
ds\nonumber\\
&& \quad -\mu \int_t^{t+\triangle t}\int_{\TP} \nabla
\u(s,\x) :\nabla\varphib(t,\x)\,  d\x ds \nonumber\\
&&\quad +\int_t^{t+\Delta t}\int_\TP \f(s,\x)\cdot \varphib(t,\x)d\x ds,
\label{4.2}
\end{eqnarray}
where $\nabla\varphib=(\partial_{x_j}\varphi_i)$ is the $3\times 3$ matrix,
and $A:B$ is the matrix product $\sum_{i,j} a_{ij}b_{ij}$ for $A=(a_{ij})$
and $B=(b_{ij})$.

By approximation, equality \eqref{4.2} still holds
for
$$
\varphib\in L^\infty(0,T; H^1(\TP))\cap C([0,T]; L^2(\TP))\cap L^\infty([0,T)\times\TP).
$$
Choose
$$
\varphib=\varphib^\delta(t,\x):=(j_\delta*\w)(t,\x)=\int
j_\delta(\y)\w(t,\x-\y)\, d\y\in\R^3
$$
for $\delta>0$, which is periodic in $\x\in\R^3$ with period $\TP$, where $j_\delta(\x)=\frac{1}{\delta^3}j(\frac{\x}{\delta})\ge 0$
is a standard mollifier
with $j\in C_0^\infty(\R^3)$ and $\int j(\x)d\x=1$.
Then
\begin{equation}\label{4.3a}
\nabla\cdot \varphib^\delta(t,\x)=0,
\end{equation}
since $\nabla\cdot \w(t,\x)=0$.

Integrating \eqref{4.2} in $t$ over $[0, T-\triangle t)$ with
$\varphib=\varphib^\delta(t,\x)$ and using \eqref{4.3a}, we have
\begin{eqnarray}
&&\int_0^{T-\triangle t}\int_{\TP} |\w(t,\x)|^2 d\x dt
\nonumber\\
&&=\int_0^{T-\triangle t}\int_t^{t+\triangle t}\int_{\TP}
(\u\otimes\u)(s,\x): \nabla\varphib^\delta (t,\x) \, d\x ds dt\nonumber\\
&&\quad -\mu \int_0^{T-\triangle t}\int_{\TP}\int_t^{t+\Delta t}
\nabla \u(s,\x): \nabla\varphib^\delta(t,\x)\, ds d\x dt\nonumber\\
&&\quad + \int_0^{T-\triangle t}\int_{\TP} \w(t,\x)\cdot \big(\w(t,\x)
-\varphib^\delta(t,\x)\big)\, d\x dt \nonumber\\
&&\quad +\int_0^{T-\Delta t}\int_t^{t+\Delta t}\int_\TP
\f(s,\x)\cdot \varphib^\delta(t,\x)d\x ds dt\nonumber\\
&&=:J_1^\delta +J_2^\delta+J_3^\delta +J_4^\delta.
\label{4.2a}
\end{eqnarray}

Notice that, for any $\x\in\TP$,
\begin{eqnarray*}
|\nabla\varphib^\delta(t,\x)|
&\le& \frac{1}{\delta}
\Big|\int_{|\x-\y|\le \delta} j_\delta'(\x-\y)\w(t,\y)\, d\y\Big|\\
&\le& \frac{C}{\delta^{5/2}}\big(\int |j'(\y)|^2 d\y\big)^{1/2}\|\w(t,\cdot)\|_{L^2(\TP)}\\
&\le& \frac{C}{\delta^{5/2}}.
\end{eqnarray*}
Here and hereafter, we use $C>0$ as a universal constant independent
of $\mu>0$.

Then
\begin{equation}\label{4.4}
|J_1^\delta|\le \frac{C T\triangle t}{\delta^{5/2}}\|\u(t,\cdot)\|_{L^2(\TP)}^2
\le \frac{C T\triangle t}{\delta^{5/2}}\|\u_0\|_{L^2(\TP)}^2\le \frac{C\triangle t}{\delta^{5/2}}.
\end{equation}

Furthermore, we have
\begin{eqnarray}
|J_2^\delta|&\le& \int_0^{T-\triangle t}\int_{\TP}\int_t^{t+\triangle t}
   \mu |\nabla \u(s,\x)||\nabla \varphib^\delta(t,\x)|\, ds d\x
dt\nonumber\\
&\le& \int_0^{T-\triangle t}\int_{\TP} \Big(\int_t^{t+\Delta t}
   \sqrt{\mu} |\nabla \u(s,\x)| ds\Big)
      \sqrt{\mu}|\nabla \varphib^\delta(t,\x)|\, d\x dt\nonumber\\
&\le&\Big(\int_0^{T-\triangle t}\int_{\TP}
 \big(\int_t^{t+\triangle t}\sqrt{\mu}|\nabla \u(s,\x)| ds\big)^2 d\x
dt\Big)^{1/2}\nonumber\\
&&\times
 \Big(\int_0^{T-\triangle
t}\int_{\TP}\mu|\nabla\varphib^\delta(t,\x)|^2d\x dt \Big)^{1/2}\nonumber\\
&\le&
(\triangle t)^{1/2}\Big(\int_0^{T-\triangle t}\int_{\TP}
 \int_t^{t+\triangle t}\mu|\nabla \u(s,\x)|^2 ds d\x
dt\Big)^{1/2}\nonumber\\
&&\times
\Big(\int_0^{T-\triangle t}
\int_{\TP}\mu \big(\int_{|\y|\le\delta} j_\delta(\y)|\nabla \w(t, \x-\y)| d\y\big)^2d\x dt \Big)^{1/2}\nonumber\\
&\le& C \triangle t \Big(\mu \int_{\R^3} |j_\delta(\y)|^2d\y
           \int_{|\y|\le \delta}\int_0^{T-\triangle t}\int_{\TP} \mu |\nabla
\w(t,\x-\y)|^2d\x dt d\y\Big)^{1/2} \nonumber\\
&\le& C \triangle t, \label{4.5}
\end{eqnarray}
where we have used
$$
\|\sqrt{\mu}\nabla\u\|_{L^2([0, T)\times\TP)}^2\le M_T
$$
from \eqref{energy-ineq} for the weak solutions in Theorem 1.1 with $M_T>0$ independent of $\mu$.

On the other hand, for
$$
J_3^\delta:=\int_0^{T-\triangle t}\int_{\TP}
\w(t,\x)\big(\w(t,\x)-\varphib^\delta(t,\x)\big)d\x dt,
$$
we find
\begin{eqnarray}
|J_3^\delta|&=&\Big|\int_0^{T-\triangle t}\int_{\TP} \w(t,\x)\Big(\int_{\R^3}
j_\delta(\x-\y)(\w(t,\x)-\w(t,\y))d\y\Big) d\x dt\Big|
\nonumber\\
&\le& \int_0^{T-\triangle t}\int_{\TP}\int_{|\y|\le 1} j(\y) |\w(t,\x)| |\w(t,
\x)-\w(t,\x-\delta \y)|\, d\y d\x dt
\nonumber\\
&\le& \int_{|\y|\le 1} j(\y) \Big(\int_0^T\int_{\TP} |\w(t,\x)| |\w(t,
\x)-\w(t,\x-\delta \y)|\, d\x dt\Big) d\y dt\nonumber\\
&\le& C\int_{|\y|\le 1}  j(\y)
  \Big(\int_0^T\int_{\TP} |\w(t, \x)-\w(t,\x-\delta \y)|^2d\x dt \Big)^{1/2} d\y
dt\nonumber\\
&\le& C\delta^\alpha \int_{\R^3} j(\y)|\y|^\alpha d\y \nonumber\\
&\le&  C_3 \delta^\alpha.
\label{4.6}
\end{eqnarray}
Here we have used the fact that $\|\w(t,\cdot)\|_{L^2(\TP)}\le C$ and
\begin{eqnarray*}
&&\int_0^T\int_{\TP} |\w(t, \x)-\w(t,\x-\delta \y)|^2d\x dt\\
&&=\int_0^T\Big(\sum_{\kk} |\hat{\w}(t,\kk)|^2 (1-e^{i\delta\kk\cdot\y})^2\Big) dt\\
&&\le C \delta^{2\alpha} |\y|^{2\alpha}\int_0^T\Big(\sum_{\kk}|\kk|^{2\alpha}|\hat{\w}(t,\kk)|^2\Big) dt\\
&&\le C \delta^{2\alpha} |\y|^{2\alpha}\int_0^T\int_{\TP} |D_\x^\alpha(t,\x)|^2d\x dt\\
&&\le  C \delta^{2\alpha} |\y|^{2\alpha}.
\end{eqnarray*}

Moreover, we have
\begin{eqnarray}
|J_4^\delta|&=& C\Delta t \|\f\|_{L^2([0, T)\times\TP)}
           \|\varphib^\delta\|_{L^2((0, T-\Delta t)\times\TP)}\nonumber \\
 &\le& C\Delta t \|\w\|_{L^2((0, T-\Delta t)\times\TP)}\nonumber\\
 &\le&  C\Delta t \|\u\|_{L^2((0, T)\times\TP)}\nonumber\\
 &\le& C_4\Delta t.  \label{4.5a}
\end{eqnarray}

Combining \eqref{4.2a}--\eqref{4.6} with \eqref{4.5a}, we have
\begin{eqnarray*}
\int_0^{T-\triangle t}\int_{\TP} |\w(t,\x)|^2 d\x dt
&\le& \inf_{\delta>0} \{C_1\frac{\triangle t}{\delta^{5/2}}+(C_2+C_4) \triangle t
                      +C_3 \delta^\alpha\}\nonumber\\
&\le& \inf_{\delta>0} \{C_5\frac{ \triangle t}{\delta^{5/2}} +C_3
\delta^\alpha\}.
\end{eqnarray*}
Choose
$$
\delta=\Big(\frac{5C_5}{2\alpha C_3}\Big)^{\frac{2}{5+2\alpha}}
(\triangle t)^{\frac{2}{5+2\alpha}}.
$$
We conclude
\begin{equation}
\int_0^{T-\triangle t}\int_{\TP} |\w(t,\x)|^2 d\x dt
\le C (\triangle t)^{\frac{2\alpha}{5+ 2\alpha}}.
\end{equation}
This completes the proof.
\end{proof}

As a direct corollary, we have

\begin{proposition}\label{prop:4.2}
Under {\rm Assumption (K41w)}, for any $T\in (0, \infty)$,
there exists $M_T>0$ depending on $\u_0, \f$, and $T>0$, but independent of $\mu>0$,
such that
\begin{equation}\label{4.8}
\int_0^T\int_{\TP} |D^\alpha \u^\mu(t,\x)|^2 dt d\x\le M_T<\infty,
\end{equation}
for some $\alpha >0$.
\end{proposition}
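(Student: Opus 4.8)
The plan is to read Proposition 4.2 as the assertion that $\u^\mu$ belongs, uniformly in $\mu>0$, to a fractional Sobolev space in the \emph{joint} space--time variables, where the spatial fractional regularity is supplied by Proposition 3.1 and the temporal fractional regularity is extracted from the time-translation estimate \eqref{4.1} of Proposition 4.1. Concretely I would fix $\alpha\in(0,\beta/2)$ as in Proposition 3.1, set $\sigma=\tfrac{\alpha}{5+2\alpha}>0$, and choose the exponent in \eqref{4.8} to be any $\alpha_*\in\big(0,\min\{\alpha,\sigma\}\big)$. The constant $M_T$ will simply be the sum of the constants already produced in Proposition 3.1, Proposition 4.1, and the energy estimate \eqref{energy-estimate}, every one of which is independent of $\mu$, so the $\mu$-independence claimed in \eqref{4.8} is automatic once the two one-variable statements are in hand.

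First I would recall the two ingredients in Fourier/finite-difference form. Proposition 3.1 gives
\[
\int_0^T\sum_{\kk}|\kk|^{2\alpha}\,|\hat{\u^\mu}(t,\kk)|^2\,dt\le M_T,
\]
so the purely spatial fractional derivative of order $\alpha_*\le\alpha$ is bounded in $L^2((0,T)\times\TP)$ uniformly in $\mu$. Proposition 4.1 gives, for all small $\triangle t>0$,
\[
\int_0^{T-\triangle t}\|\u^\mu(t+\triangle t,\cdot)-\u^\mu(t,\cdot)\|_{L^2(\TP)}^2\,dt\le M_T\,(\triangle t)^{2\sigma}.
\]
By the standard characterization of fractional Sobolev regularity through integrated translations, this modulus of continuity upgrades, for every $s<\sigma$ and in particular for $s=\alpha_*$, to
\[
\int_0^1\int_0^{T-\triangle t}\frac{\|\u^\mu(t+\triangle t,\cdot)-\u^\mu(t,\cdot)\|_{L^2(\TP)}^2}{(\triangle t)^{1+2\alpha_*}}\,dt\,d(\triangle t)\le M_T,
\]
so the purely temporal fractional derivative of order $\alpha_*$ is likewise bounded in $L^2((0,T)\times\TP)$ uniformly in $\mu$.

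Then I would combine the two directions. Up to universal constants, the inhomogeneous fractional seminorm of order $\alpha_*$ in the joint variable $(t,\x)$ is controlled by the sum of the purely spatial seminorm of order $\alpha_*$, the purely temporal seminorm of order $\alpha_*$, and the $L^2((0,T)\times\TP)$-norm of $\u^\mu$ itself; on the Fourier side this is just the elementary subadditivity inequality $(1+|\tau|^2+|\kk|^2)^{\alpha_*}\le C\,(1+|\tau|^{2\alpha_*}+|\kk|^{2\alpha_*})$ valid for $\alpha_*\le 1$. The first two terms were bounded in the previous paragraph and the last by the energy estimate \eqref{energy-estimate}, all by $\mu$-independent constants, which yields $\|D^{\alpha_*}\u^\mu\|_{L^2((0,T)\times\TP)}\le M_T$, i.e.\ \eqref{4.8}.

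The only genuine subtlety I anticipate is the handling of the finite time interval $(0,T)$ near its endpoints, where the translation characterization of $H^{\alpha_*}$ in $t$ is cleanest on all of $\R$: one must either extend $\u^\mu(t,\cdot)$ to $t\in\R$ (for instance by reflection at $t=0$ and $t=T$, using $\u^\mu\in C([0,T];L^2_w)$ and the energy bound to control the extension) or work with the local Gagliardo seminorm on $(0,T)$ so that only $|\triangle t|\le 1$ is involved; this is routine and introduces no dependence on $\mu$. I should also note that if $D^\alpha$ in \eqref{4.8} is instead intended as the purely spatial fractional derivative $D^\alpha_\x$, then Proposition 4.2 is nothing but Proposition 3.1 restated via the Parseval identity, and there is nothing further to prove.
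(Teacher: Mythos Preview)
Your argument is correct. The paper gives no proof at all for Proposition~4.2 beyond the phrase ``As a direct corollary, we have'', placed immediately after Proposition~4.1; the intended content is precisely your second step, namely that the uniform time-translation modulus \eqref{4.1} with exponent $2\sigma=\tfrac{2\alpha}{5+2\alpha}$ yields a uniform bound on the \emph{temporal} fractional derivative of any order $\alpha_*<\sigma$ in $L^2((0,T)\times\TP)$, via the standard Gagliardo/translation characterization you wrote down. So the symbol $D^\alpha$ in \eqref{4.8} is, in the paper's logic, the time derivative: this is why the very next sentence of the paper reads ``Combining Proposition~3.1 with Proposition~4.2, we have \eqref{4.9}'', i.e.\ spatial regularity (Prop.~3.1) plus temporal regularity (Prop.~4.2) gives the joint $H^\alpha$ bound.

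Your write-up interprets $D^\alpha$ as the full space--time fractional derivative and therefore carries out both the temporal step and the combination step (your subadditivity inequality) in one go; this is a slightly stronger reading of the statement, but it is exactly what the paper arrives at one line later in \eqref{4.9}, and your proof of it is sound. Your remark on handling the finite interval $(0,T)$ by reflection or by localizing the Gagliardo seminorm is the right way to dispatch the only technical point; the paper suppresses this entirely.
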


\medskip

Combining Proposition 3.1 with Proposition 4.2, we have
\begin{equation}\label{4.9}
\|\u^\mu\|_{H^\alpha([0, T)\times\TP)}\le M_T<\infty
\end{equation}
for some $\alpha>0$.
Then, by the Sobolev imbedding theorem, we have

\begin{proposition}\label{prop:4.3}
Under {\rm Assumption (K41w)},  for any $T\in (0, \infty)$,
there exists $M_T>0$ depending on $\u_0, \f$, and $T>0$, but independent of $\mu>0$,
such that
\begin{equation}\label{3.2}
\|\u^\mu\|_{L^2\cap L^q([0, T)\times\TP)}\le M_T<\infty
\end{equation}
for some $q>2$.
\end{proposition}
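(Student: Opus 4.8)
The plan is to obtain Proposition~\ref{prop:4.3} as an essentially immediate consequence of the uniform space-time regularity bound \eqref{4.9} via the fractional Sobolev embedding theorem, the only real task being to make sure that every constant that appears is independent of $\mu$. The $L^2$-component of \eqref{3.2} requires nothing new: the energy estimate \eqref{energy-estimate} already gives $\|\u^\mu\|_{L^\infty(0,T;L^2(\TP))}\le C_T$ with $C_T$ independent of $\mu$, hence $\|\u^\mu\|_{L^2([0,T)\times\TP)}\le T^{1/2}C_T$. So the whole content is to convert \eqref{4.9} into an $L^q$-bound with some $q>2$.

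First I would observe that, since $[0,T)\times\TP$ is a bounded $4$-dimensional domain, the bound \eqref{4.9} holds a fortiori with $\alpha$ replaced by any smaller positive exponent, so we may assume $\alpha\in(0,\tfrac12)$. Next, extend $\u^\mu$ from $[0,T)\times\TP$ to the $4$-torus $\mathbb{T}^4:=[-T,T]\times\TP$ (endpoints $\pm T$ identified) by even reflection in $t$ across $t=0$ together with $\TP$-periodicity in $\x$; for $0<\alpha<\tfrac12$ even reflection is bounded on $H^\alpha$, so the extended field $\tilde{\u}^\mu$ satisfies $\|\tilde{\u}^\mu\|_{H^\alpha(\mathbb{T}^4)}\le C(T)\,\|\u^\mu\|_{H^\alpha([0,T)\times\TP)}\le C(T)M_T$ with $C(T)$ independent of $\mu$. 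Writing $\tilde{\u}^\mu$ as a Fourier series on $\mathbb{T}^4$, the fractional Sobolev inequality gives $H^\alpha(\mathbb{T}^4)\hookrightarrow L^q(\mathbb{T}^4)$ with $\tfrac1q=\tfrac12-\tfrac{\alpha}{4}$, i.e. $q=\tfrac{4}{2-\alpha}$; since $\alpha>0$ this $q$ is strictly larger than $2$, and since $\alpha<\tfrac12<2$ the embedding is valid. Restricting back to $[0,T)\times\TP$, $\|\u^\mu\|_{L^q([0,T)\times\TP)}\le\|\tilde{\u}^\mu\|_{L^q(\mathbb{T}^4)}\le C\,\|\tilde{\u}^\mu\|_{H^\alpha(\mathbb{T}^4)}\le CM_T$, with all constants independent of $\mu$; combining with the $L^2$-bound yields \eqref{3.2}.

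I do not expect a genuine obstacle here; the two points deserving attention are (a) that \eqref{4.9} is to be read as an \emph{isotropic} space-time $H^\alpha$-estimate, which is exactly what Propositions~\ref{prop:3.1} and~\ref{prop:4.2} produce once one invokes the standard equivalence between the isotropic norm $\|(1+|\xi|^2+|\tau|^2)^{\alpha/2}\widehat{\u^\mu}\|_{L^2}$ and the sum of the purely spatial and purely temporal fractional $L^2$-norms (harmless for $\alpha<\tfrac12$), and (b) keeping all extension and embedding constants manifestly $\mu$-independent, which is why it is cleanest to reflect in time and argue on a torus via Fourier series rather than to appeal to a general extension operator for a domain with boundary. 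Any such technical care aside, the statement is a one-line corollary of \eqref{4.9}.
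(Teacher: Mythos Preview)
Your proposal is correct and follows essentially the same route as the paper: the paper's entire argument is the single line ``Combining Proposition~\ref{prop:3.1} with Proposition~\ref{prop:4.2}, we have \eqref{4.9} for some $\alpha>0$. Then, by the Sobolev imbedding theorem, we have Proposition~\ref{prop:4.3}.'' You have simply made explicit the technical details (reducing to $\alpha<1/2$, extending by even reflection to a torus, identifying the embedding exponent $q=4/(2-\alpha)$, and tracking $\mu$-independence of constants) that the paper leaves to the reader.
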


\medskip

\section{Inviscid Limit}
In this section we show the inviscid limit from the Navier-Stokes to the Euler equations
under \textit{Assumption (K41w)}.

\begin{theorem}\label{thm:5.1}
The pathwise Kolmogorov hypothesis, {\rm Assumption (K41w)}, implies the strong compactness in
$L^2\cap L^q([0, T)\times\TP)$, for some $q>2$,
of the solutions $\u^\mu(t,\x)$ of the Navier-Stoke equations in $\R^3$ when
the viscosity $\mu$ tends to zero. That is,
there exist a subsequence (still denoted) $\u^\mu(t,\x)$
and a function $\u\in L^2\cap L^{q}([0,T)\times\TP)$
with a corresponding pressure function $p$ such that
$$
\u^\mu (t, \x)\to \u(t,\x) \qquad \mbox{a.e. as } \, \mu\to 0,
$$
and $\u(t,\x)$ is a weak solution of the
incompressible Euler equations with Cauchy data $\u_0(\x)$
along with the corresponding pressure $p$.
Furthermore,
\begin{equation}\label{energy-ineq-b}
\int_{\TP} |\u(t,\x)|^2d\x \le \int_{\TP}|\u_0(\x)|^2d\x+2\int_0^t\int_\TP\u(s,\x)\cdot\f(s,\x) d\x ds.
\end{equation}
\end{theorem}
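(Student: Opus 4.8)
\medskip\noindent\textbf{Proof proposal.}\ The plan is to feed the uniform (in $\mu>0$) bounds already established—Proposition~\ref{prop:3.1}, Proposition~\ref{prop:4.1}, Proposition~\ref{prop:4.2}, their consequence \eqref{4.9}, and Proposition~\ref{prop:4.3}—into a compactness-and-passage-to-the-limit argument. First I would invoke \eqref{4.9}, namely $\|\u^\mu\|_{H^\alpha([0,T)\times\TP)}\le M_T$ with $\alpha>0$: since $[0,T)\times\TP$ is a bounded space--time domain, the fractional Rellich--Kondrachov embedding $H^\alpha\hookrightarrow\hookrightarrow L^2$ is compact, so there is a subsequence (not relabelled) and a limit $\u$ with $\u^\mu\to\u$ strongly in $L^2([0,T)\times\TP)$ and, after a further extraction, a.e.\ in $[0,T)\times\TP$. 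Interpolating this strong $L^2$ convergence against the uniform bound $\|\u^\mu\|_{L^q([0,T)\times\TP)}\le M_T$, $q>2$, from Proposition~\ref{prop:4.3} (via $\frac1r=\frac\theta2+\frac{1-\theta}q$) upgrades the convergence to strong in $L^r([0,T)\times\TP)$ for every $r\in[2,q)$, so that $\u\in L^2\cap L^q([0,T)\times\TP)$ and the asserted strong compactness holds; the constraint $\nabla\cdot\u=0$ passes to the limit from $\nabla\cdot\u^\mu=0$ by weak convergence.

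Next I would pass to the limit in the distributional form of \eqref{eq:ns}. Testing against divergence-free periodic $\varphib\in C^\infty([0,T)\times\TP)$ with $\varphib(T,\cdot)=0$ eliminates the pressure and gives
\begin{align*}
&-\int_0^{T}\!\!\int_{\TP}\u^\mu\cdot\d_t\varphib\,d\x dt-\int_{\TP}\u_0\cdot\varphib(0,\cdot)\,d\x-\int_0^{T}\!\!\int_{\TP}(\u^\mu\otimes\u^\mu):\nabla\varphib\,d\x dt\\
&\qquad=-\mu\int_0^{T}\!\!\int_{\TP}\nabla\u^\mu:\nabla\varphib\,d\x dt+\int_0^{T}\!\!\int_{\TP}\f\cdot\varphib\,d\x dt.
\end{align*}
The linear term converges by the strong (indeed even weak) $L^2$ convergence of $\u^\mu$; the viscous term is bounded by $\sqrt\mu\,\|\sqrt\mu\,\nabla\u^\mu\|_{L^2([0,T)\times\TP)}\,\|\nabla\varphib\|_{L^2}\le\sqrt\mu\,M_T\,\|\nabla\varphib\|_{L^2}\to0$ by the uniform bound used already in \eqref{4.5}; and the force term is fixed. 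The essential point is the nonlinear term: since $\u^\mu\to\u$ a.e.\ and $\{\u^\mu\otimes\u^\mu\}$ is bounded in $L^{q/2}([0,T)\times\TP)$ with $q/2>1$, this family is uniformly integrable, so Vitali's convergence theorem gives $\u^\mu\otimes\u^\mu\to\u\otimes\u$ in $L^1([0,T)\times\TP)$ and the nonlinear term converges to $\int_0^{T}\!\!\int_{\TP}(\u\otimes\u):\nabla\varphib\,d\x dt$. Hence $\u$ satisfies the weak incompressible Euler equations against divergence-free test fields; the surviving $\u_0$-term shows that the Cauchy data is attained, with $\u\in C([0,T];L_w^2(\TP))$ (from $\u\in L^\infty(0,T;L^2)$ together with $\d_t\u$ in a negative-order space read off from the limiting equation), so that $\u(0,\cdot)=\u_0$; and the pressure $p$ is recovered by de Rham's theorem, the distribution $\d_t\u+\nabla\cdot(\u\otimes\u)-\f$ annihilating all divergence-free test fields and hence equalling $-\nabla p$ for some $p$ (in an $L^2+L^{q/2}$-type space, given $\u\in L^2\cap L^q$ and $\u\otimes\u\in L^1\cap L^{q/2}$).

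For \eqref{energy-ineq-b} I would start from the distributional energy inequality \eqref{energy-ineq}: integrating it over $\TP$ kills the spatial divergence term and $\mu\Delta(|\u^\mu|^2/2)$ by periodicity, and integrating in time over $(0,t)$ and then discarding the nonnegative dissipation $\mu\int_0^t\!\int_{\TP}|\nabla\u^\mu|^2$ yields
$$
\int_{\TP}|\u^\mu(t,\x)|^2\,d\x\le\int_{\TP}|\u_0(\x)|^2\,d\x+2\int_0^t\!\!\int_{\TP}\u^\mu(s,\x)\cdot\f(s,\x)\,d\x ds
$$
for a.e.\ $t\in(0,T)$. The right-hand side converges by the strong $L^2$ convergence of $\u^\mu$; for the left-hand side, strong $L^2([0,T)\times\TP)$ convergence gives, for a.e.\ $t$, a subsequence along which $\u^\mu(t,\cdot)\to\u(t,\cdot)$ in $L^2(\TP)$, whence $\int_{\TP}|\u(t)|^2\le\liminf_{\mu}\int_{\TP}|\u^\mu(t)|^2$, and \eqref{energy-ineq-b} follows for a.e.\ $t$, as is appropriate for a weak solution.

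I expect the main obstacle to be precisely the passage to the limit in the nonlinear term $\u^\mu\otimes\u^\mu$, where strong convergence—hence Assumption (K41w) through the uniform $H^\alpha$ bound—is indispensable: with only weak $L^2$ compactness one cannot justify the interchange of limit and nonlinearity, which is exactly the gap noted in the Introduction for the measure-valued framework. The remaining steps are routine but must be carried out with care: the fractional Rellich--Kondrachov compactness together with the interpolation upgrade to $L^r$, the de Rham recovery of $p$ in the correct function space, and the lower-semicontinuity direction in the energy inequality.
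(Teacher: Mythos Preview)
Your proposal is correct and follows essentially the same route as the paper: strong $L^2$ compactness from the uniform $H^\alpha$ bound, passage to the limit in the weak formulation against divergence-free test fields (the viscous term vanishing via the $\sqrt{\mu}\,\nabla\u^\mu$ bound, the nonlinear term via strong convergence), and the energy inequality from \eqref{energy-ineq}. If anything, you are more careful than the paper in two spots---you justify the nonlinear limit by Vitali (the paper records only a.e.\ convergence of $\u^\mu\otimes\u^\mu$, which alone is insufficient, though the needed uniform integrability is implicit from the $L^{q/2}$ bound with $q>2$), and you make the de Rham recovery of $p$ and the lower-semicontinuity step in \eqref{energy-ineq-b} explicit.
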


\medskip

\begin{proof}
 Propositions 3.1 and 4.1 imply the $L^2$--equicontinuity of $\u^\mu(t,\x)$
in $(t,\x)\in [0, T)\times\TP$, independent of $\mu>0$.
This yields that there exists a subsequence (still denoted) $\u^\mu(t,\x)$
and a function $\u(t,\x)\in L^2$ such that
$$
\u^\mu(t,\x)\to \u(t,\x)      \qquad \mbox{in}\,\, L^2 \,\,\mbox{as $\mu\to 0$},
$$
which implies that
$$
\u^\mu(t,\x)\to \u(t,\x)      \qquad \mbox{a.e.}\,\,\mbox{as $\mu\to 0$},
$$
\begin{equation}\label{5.1}
(\u^\mu\otimes \u^\mu)(t,\x)\to (\u\otimes\u)(t,\x)      \qquad \mbox{a.e.}\,\,\mbox{as $\mu\to 0$},
\end{equation}
and
\begin{equation}\label{5.2}
\nabla\cdot \u(t,\x)=0
\end{equation}
in the sense of distributions in $\R_T^3$.

\medskip

{}From Proposition \ref{prop:4.3}, we have
$$
\u\in L^2\cap L^q([0, T)\times\TP),
$$
and
$$
\u^\mu(t,\x)\to \u(t,\x)
 \qquad \mbox{in}\,\, L^2\cap L^q([0, T)\times\TP) \,\,\mbox{as $\mu\to 0$}
$$
for some $q>2$.

\medskip
Furthermore, for any $\varphib\in C_0^\infty(\R_T^3)$ with $\nabla\cdot \varphib=0$,
we multiply $\varphib$ both sides \eqref{eq:ns} and integrate over $\R_T^3$ to
obtain
\begin{eqnarray*}
&&\int_0^T\int_{\R^3}\big(\u^\mu \varphib_t + (\u^\mu\otimes\u^\mu):\nabla \varphib
+\u^\mu\cdot\f\, \varphib\big)\, d\x dt
+\int_{\R^3} \u_0(\x)\varphib(0,\x)\, d\x\\
&&=-\mu \int_0^T\int_{\R^3} \u^\mu\cdot \triangle \varphib\, d\x dt.
\end{eqnarray*}
Letting $\mu\to 0$, we conclude that
\begin{equation}\label{5.4}
\int_0^T\int_{\R^3}\big(\u \varphib_t + (\u\otimes\u):\nabla \varphib
+(\u\cdot\f)\, \varphib\big)\, d\x dt
+\int_{\R^3} \u_0(\x)\varphib(0,\x)\, d\x=0.
\end{equation}
Combining \eqref{5.4} with \eqref{5.1} yields that $\u(t,\x)$ is a weak
solution of the Euler equations \eqref{eq:ns} ($\mu=0$) with Cauchy
data \eqref{1.2} along with the corresponding pressure $p$.

\medskip
Integrating \eqref{energy-ineq} over $[0, T)\times\TP$ yields
$$
\int_{\TP} |\u^\mu(t,\x)|^2 d\x
\le \int_{\TP} |\u_0(\x)|^2 d\x +2\int_0^t\int_\TP \u^\mu(s,\x)\cdot\f(s,\x) d\x ds.
$$
Letting $\mu\to 0$, we have
$$
\int_{\TP} |\u(t,\x)|^2 d\x
\le \int_{\TP} |\u_0(\x)|^2 d\x +2\int_0^t\int_\TP \u(s,\x)\cdot\f(s,\x) d\x ds.
$$
This completes the proof.
\end{proof}

\begin{remark}
The question whether the energy inequality in \eqref{energy-ineq-b}
becomes identical is exactly Onsager's conjecture \cite{Onsager}.
It states that weak solutions of the Euler equations for incompressible fluids
in $\R^3$ conserve energy only if they have a certain minimal smoothness of the
order of $1/3$ fractional derivatives in $\x\in\R^3$ and that they dissipate
energy if they are rougher.
Indeed, in Cheskidov-Constantin-Friedlander-Shvydkoy \cite{CCFS},
it is proved that energy is conserved when $\u(t,\x)$ is in the
Besov space of tempered distributions  $B^{1/3}_{2,c(\mathbb{N})}$.
This is a space in which the H\"{o}lder exponent is exactly $1/3$,
but the slightly better regularity is encoded in the summability condition.
We show in Proposition 3.1 that {\em Assumption (K41w)} implies
the uniform boundedness in the norm of $L^2(0, T; H^\alpha(\TP))$ for some $\alpha>0$
for the solutions to the Navier-Stokes
equations, independent of the viscosity $\mu>0$.
{}From the point of view of physics, strengthening the Kolmogorov
exponent $5/3$  (i.e. $\beta=2/3$), as would be needed to obtain a stronger $\alpha = 1/3$
bound, may result from some theory of intermittency.
\end{remark}

\section{Incompressible Navier-Stokes Equations with Passive Scalar Fields}

Consider the incompressible Navier-Stokes equations with passive scalar fields in $\R^3$:
\begin{equation}\label{6.1}
\left\{\begin{aligned}
&\partial_t\u+\nabla\cdot(\u\otimes \u)+\nabla p=\mu_0 \Delta \u +\f,\\
&\partial_t \chi_i +\nabla\cdot (\chi_i\u)=\mu_i\Delta \chi_i,  \quad i=1, \cdots, I,\\
&\nabla\cdot \u=0,
\end{aligned}\right.
\end{equation}
with Cauchy functions:
\begin{equation}\label{6.2}
(\u, \chi_1,\cdots, \chi_I)|_{t=0}=(\u_0, \chi_{10}, \cdots, \chi_{I0})(\x)
\end{equation}
that are periodic in $\x\in\R^3$ with period $\TP$,
where $\chi_{i0}(\x)\ge 0$ with $\sum_{i}\chi_{i0}(\x)=1$,
and $\f\in L^2_{loc}(0,\infty; L^2(\TP))$ is a given external force.

\smallskip
These equations are interpreted as describing concentrations $\chi_i$, $i < I$,
of minority species, such as dilute chemical reagents in a majority carrier
species $\chi_I$, in an approximation that the minority species do not influence
the bulk fluid density nor the bulk fluid viscosity. Thus, they do not interact
with (influence) the bulk fluid, but are passively transported by it;
hence the name passive scalars. The new physics introduced by $\chi_i$
is mixing, with new dimensionless parameters, the species Schmidt numbers
$\mu_0/\mu_i$, and the associated length scales for diffusion, the Batchelor
scales. See Monin-Yaglom \cite{MY} for further information.

\smallskip
Similarly to Theorem 1.1, for each fixed $\mmu=(\mu_0,\mu_1,\cdots, \mu_I),
\mu_j>0, j=0,1,\cdots, I$, there exist a global periodic solution
$(\u^\mmu, \chi_1^\mmu, \cdots, \chi_I^\mmu)(t,\x)$ in $\x\in \R^3$ with period $\TP$
and a corresponding
pressure function $p^\mmu(t,\x)$
of the Cauchy problem  \eqref{6.1}--\eqref{6.2}
such that the equations in \eqref{6.1} hold in the sense of distributions,
and the following properties hold:
\begin{eqnarray*}
&& \u^{\mmu}\in L^2(0,T; H^1)\cap C_w([0, T]; L^2)\cap C([0,T]; L^{s_1}),\\
&&\partial_t \u^\mmu\in L^2(0,T; H^{-1})+\big(L^{s_2}(0,T; W^{-1, \frac{3s_2}{3s_2-2}})\cap L^q(0,T; L^r)\big),\\
&&p^\mmu\in L^2((0, T)\times \TP)+L^{s_2}(0,T; L^{\frac{3s_2}{3s_2-2}}),\\
&&\nabla p^\mmu\in L^2(0,T; H^{-1})+L^q(0,T; L^r),\\
&&0\le \chi_i^\mmu(t,\x)\le 1, \qquad i=1,2,\cdots, I,\\
&&\chi_i^\mmu\in L^2(0,T; H^{1})
\cap C([0,T]; L^{s_2}),\\
&&\partial_t\chi_i^\mmu \in L^2(0,T; H^{-1})
+ (L^{s_2}(0,T; W^{-1, \frac{6s_2}{3s_2-2}})\cap L^q(0,T; L^{2r})\big),
\end{eqnarray*}
where $1\le s_1<2$, $1\le s_2<\infty$, $1\le q<2$, and $r=\frac{3q}{2(2q-1)}$;
and, in addition,
\begin{eqnarray*}
&&\partial_t(\frac{1}{2}|\u^\mmu|^2)+\nabla\cdot(\u^\mmu (\frac{1}{2}|\u^\mmu|^2+p^\mmu))
+\mu_0 |\nabla\u^\mmu|^2-\mu_0 \Delta(\frac{|\u^\mmu|^2}{2})\le \f\cdot\u^\mu, \label{energy-ineq:a}\\
&&\partial_t\beta(\chi_i^\mmu)+\nabla\cdot (\u^\mmu\beta(\chi_i^\mmu))
\le \mu_i\Delta \beta(\chi_i^\mmu)
\end{eqnarray*}
in the sense of distributions, for any $C^2$--function $\beta(\chi), \beta''(\chi)\ge 0$.

\begin{theorem}\label{thm:6.1}
The pathwise Kolmogorov hypothesis, {\rm Assumption (K41w)}, implies the strong compactness in
$L^2\cap L^q$ of the velocity sequence $\u^\mmu(t,\x)$, for some $q>2$,  when
the viscosity $\mmu$ tends to zero. That is,
there exist a function $\u\in L^2\cap L^q([0,T)\times\TP)$
and a subsequence (still denoted) $\u^\mmu(t,\x)$ such that
$$
\u^\mmu (t, \x)\to \u(t,\x) \qquad \mbox{in}\,\,
L^2\cap L^q([0,T)\times\TP)\,\, \mbox{as}\,\,\mmu\to 0.
$$
Furthermore,
$$
0\le \chi_i^\mmu(\x)\le 1, \qquad i=1, \cdots, I,
$$
which implies the weak-star convergence subsequentially (still denoted)
$\chi_i^\mmu(t,\x)$ in $L^\infty$ to some functions $\chi_i(t,\x),
0\le \chi_i(t,\x)\le 1$, as $\mmu\to 0$.
Moreover, the limit function $(\u, \chi_1, \cdots, \chi_I)(t,\x)$
is a weak solution of \eqref{6.1}--\eqref{6.2} and in addition,
\begin{equation} \label{6.8}
\int_{\TP}|\u(t,\x)|^2d\x \le \int_{\TP}|\u_0(\x)|^2 d\x
+2\int_0^t\int_{\TP} \u(s,\x)\cdot \f(s,\x)\, d\x ds.
\end{equation}
Denoting the weak-star limit in terms of Young measures,
with $\nu_{t,\x}^i$ the Young measure corresponding to the uniform bounded sequence
$\chi_i^\mmu$ for each $i=1,\cdots, I$,
we have
$$
\overline{\beta(\chi_i)}=\langle \nu_{t,\x}^i, \beta(\chi_i)\rangle
=w^*-\lim_{\mmu\to 0}\beta(\chi_i^\mmu)
$$
for any $C^2$--function $\beta{\chi}$. If, in addition,
$\beta''(\chi)\ge 0$, then
\begin{equation}\label{6.9}
\partial_t \overline{\beta(\chi_i)}
 +\nabla\cdot (\overline{\beta(\chi_i)}\u)\le 0
\end{equation}
in the sense of distributions.
\end{theorem}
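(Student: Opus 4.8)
The plan is to import the machinery already established for the force-free Navier--Stokes system and adapt it to the coupled system \eqref{6.1}--\eqref{6.2}, then treat the passive scalars by Young-measure/compensated-compactness arguments. First I would observe that \textit{Assumption (K41w)} is a statement about the velocity field alone, so Propositions~\ref{prop:3.1}, \ref{prop:4.1}, \ref{prop:4.2}, and \ref{prop:4.3} apply verbatim to $\u^\mmu$ once we check that the estimates survive the presence of the passive scalars. The only place the scalars could interfere is in the time-equicontinuity argument of Proposition~\ref{prop:4.1}: the identity \eqref{4.2} is driven by the $\u$-equation in \eqref{6.1}, which is identical in form to \eqref{eq:ns} with $\mu$ replaced by $\mu_0$, and the scalars $\chi_i$ do not appear in it at all (they are passive). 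Hence the bounds $J_1^\delta,\dots,J_4^\delta$ go through with $\mu_0$ in place of $\mu$, using the energy inequality for $\u^\mmu$ stated just before the theorem, and we obtain $\|\u^\mmu\|_{H^\alpha([0,T)\times\TP)}\le M_T$ uniformly in $\mmu$. By Proposition~\ref{prop:4.3} this gives the uniform bound in $L^2\cap L^q([0,T)\times\TP)$ for some $q>2$, hence strong compactness: a subsequence $\u^\mmu\to\u$ in $L^2\cap L^q$ and a.e., with $\u\in L^2\cap L^q$, and consequently $\u^\mmu\otimes\u^\mmu\to\u\otimes\u$ a.e.\ and $\nabla\cdot\u=0$ in the sense of distributions. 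Passing to the limit in the weak formulation of the $\u$-equation (multiplying by a divergence-free test function and sending $\mmu\to0$, the term $\mu_0\int\u^\mmu\cdot\triangle\varphib\to0$) shows $\u$ is a weak solution of the incompressible Euler equations with data $\u_0$ and pressure $p$, and integrating the energy inequality and letting $\mmu\to0$ yields \eqref{6.8} exactly as in Theorem~\ref{thm:5.1}.

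Next I would handle the passive scalars. Since $0\le\chi_i^\mmu(t,\x)\le 1$, the sequence $\{\chi_i^\mmu\}$ is bounded in $L^\infty([0,T)\times\TP)$, so by weak-star compactness (and the fundamental theorem of Young measures) there is a further subsequence and a Young measure $\nu_{t,\x}^i$, supported in $[0,1]$, such that for every continuous $\beta$ we have $\beta(\chi_i^\mmu)\overset{*}{\rightharpoonup}\overline{\beta(\chi_i)}=\langle\nu_{t,\x}^i,\beta\rangle$ in $L^\infty$; in particular $\chi_i^\mmu\overset{*}{\rightharpoonup}\chi_i:=\langle\nu_{t,\x}^i,\mathrm{id}\rangle$ with $0\le\chi_i\le1$. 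The key point is that the transport term in the $\chi_i$-equation is $\nabla\cdot(\chi_i^\mmu\u^\mmu)$, a product of a \emph{weakly} convergent factor $\chi_i^\mmu$ with a \emph{strongly} convergent factor $\u^\mmu$; such products pass to the limit, giving $\chi_i^\mmu\u^\mmu\rightharpoonup\chi_i\u$ (indeed $\overline{\beta(\chi_i)}\,\u^\mmu\rightharpoonup\overline{\beta(\chi_i)}\,\u$ for each $\beta$). Therefore, testing the transport equation against smooth $\varphi$ and sending $\mmu\to0$ gives $\partial_t\chi_i+\nabla\cdot(\chi_i\u)=0$ in the sense of distributions, so $(\u,\chi_1,\dots,\chi_I)$ is a weak solution of \eqref{6.1} with $\mmu=0$; the initial condition is recovered from the $C([0,T];L^{s_2})$ bound on $\chi_i^\mmu$ together with the uniform-in-$\mmu$ time continuity. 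For the entropy-type inequality \eqref{6.9}: take a convex $C^2$ function $\beta$, $\beta''\ge0$, and use the distributional inequality $\partial_t\beta(\chi_i^\mmu)+\nabla\cdot(\u^\mmu\beta(\chi_i^\mmu))\le\mu_i\triangle\beta(\chi_i^\mmu)$ stated before the theorem; multiply by a nonnegative test function, integrate, and let $\mmu\to0$. On the left, $\beta(\chi_i^\mmu)\overset{*}{\rightharpoonup}\overline{\beta(\chi_i)}$ and $\u^\mmu\beta(\chi_i^\mmu)\rightharpoonup\u\,\overline{\beta(\chi_i)}$ again by the strong-times-weak principle; on the right, $\mu_i\triangle\beta(\chi_i^\mmu)$ tested against a fixed smooth function equals $\mu_i\int\beta(\chi_i^\mmu)\triangle(\text{test})\to0$ because $\mu_i\to0$ and $\beta(\chi_i^\mmu)$ is bounded. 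This yields \eqref{6.9}.

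The main obstacle, and the only place requiring genuine care, is justifying that the product $\chi_i^\mmu\u^\mmu$ (and more generally $\overline{\beta(\chi_i)}$-weighted versions) converges to the expected limit: this is precisely where one needs the \emph{strong} $L^2$-compactness of $\u^\mmu$ — weak-weak products would fail. The point is that $\u^\mmu\to\u$ strongly in $L^2([0,T)\times\TP)$ while $\beta(\chi_i^\mmu)$ is uniformly bounded in $L^\infty$ and converges weak-star; a standard lemma (strong $L^2$ times bounded weak-star $L^\infty$) then gives $\u^\mmu\beta(\chi_i^\mmu)\rightharpoonup\u\,\overline{\beta(\chi_i)}$ in, say, $L^2_{\mathrm{loc}}$, which suffices to pass to the limit against smooth test functions. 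A secondary technical point is the recovery of the Cauchy data \eqref{6.2} for the scalars: one uses the uniform-in-$\mmu$ bound on $\partial_t\chi_i^\mmu$ in the negative-order space listed before the theorem to get equicontinuity in $t$ with values in some $H^{-m}$, hence $\chi_i^\mmu(0,\cdot)=\chi_{i0}$ is inherited in the distributional sense; the constraint $\sum_i\chi_{i0}=1$ and $\chi_{i0}\ge0$ pass to the pointwise a.e.\ bounds $0\le\chi_i\le1$ already recorded (summation of the weak-star limits preserves $\sum_i\chi_i\le1$, and in fact $=1$ since $\sum_i\chi_i^\mmu=1$ is an affine, hence weak-star-continuous, constraint). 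Everything else is a routine repetition of the arguments in Sections~3--5 with $\mu$ replaced by $\mu_0$, and I would state it as such rather than reproduce the estimates.
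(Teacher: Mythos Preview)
Your proposal is correct and follows essentially the same approach as the paper: reduce the velocity compactness to Theorem~\ref{thm:5.1} (since the passive scalars do not enter the momentum equation), extract weak-star/Young-measure limits for the bounded scalars, and pass to the limit in the transport equations and entropy inequalities via the strong-$L^2$-times-weak-star-$L^\infty$ product argument. The paper's proof is terser and does not spell out the initial-data recovery or the $\sum_i\chi_i=1$ constraint that you discuss, but the strategy is identical.
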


\begin{proof} The strong convergence of $\u^\mmu(t,\x)$ can be obtained by following
the same argument as for Theorem 5.1, which implies
$$
\u^\mmu(t,\x)\to \u(t,\x) \qquad\mbox{in}\,\, L^2\cap L^q([0, T)\times\TP),  q>2.
$$

\medskip
Since $0\le \chi_i^\mmu\le 1$, there exists a further subsequence (still denoted)
$\chi_i^\mmu$ and an $L^\infty$--function $\chi_i$ with $0\le \chi_i\le 1$ such that
$$
\chi_i^\mmu \, \overset{*}{\rightharpoonup}\, \chi_i \qquad\mbox{as}\,\,\mmu\to 0.
$$
Then the strong convergence of $\u^\mmu(t,\x)$ implies that
$$
\chi_i^\mmu\u^\mmu(t,\x)\, {\rightharpoonup}\, \chi_i \u(t,\x)
$$
in the sense of distributions as $\mmu\to 0$.

This implies
$$
\partial_t\chi_i + \nabla\cdot (\chi_i\u)=0, \qquad i=1,\cdots, I,
$$
in the sense of distributions.

\medskip
Furthermore, for any $C^2$--function $\beta(\chi)$,
$\beta(\chi_i^\mmu)$ is uniformly bounded.
By the representation theorem of weak limit by Young measures
(cf. \cite{Tartar}; also see \cite{Ball}), we conclude
 $$
\beta(\chi_i^\mmu)\, \overset{*}{\rightharpoonup} \, \langle\nu^i_{t,\x}, \beta(\chi_i)\rangle
\qquad\mbox{as}\,\,\mmu\to 0,
$$
and, using the strong convergence of $\u^\mmu(t,\x)$, we further have
$$
\u^\mmu(t,\x)\beta(\chi_i^\mmu)\, {\rightharpoonup}\,
\u(t,\x)\langle\nu^i_{t,\x}, \beta(\chi_i)\rangle
$$
in the sense of distributions as $\mmu\to 0$.

We now assume
$\beta''(\chi)\ge 0$.  Since
$$
\partial_t\beta(\chi_i^\mmu)+\nabla\cdot (\u^\mmu\beta(\chi_i^\mmu))
\le \mu_i\Delta \beta(\chi_i^\mmu),
$$
we take the limit $\mmu\to 0$ above to conclude
$$
\partial_t\langle\nu^i_{t,\x}, \beta(\chi_i)\rangle
+\nabla\cdot (\u \langle\nu^i_{t,\x}, \beta(\chi_i)\rangle)
\le 0
$$
in the sense of distributions.
\end{proof}

\section{Implications to Numerical Computations}

We discuss what can be expected in terms of numerical convergence,
on the basis of the convergence framework developed in this paper.
Because the framework depends on compactness and subsequences, there
is an implicit hint that uniqueness will be a problem. In fact,
nonunique solutions for the Euler equations has been known for
over a decade; also see Scheffer \cite{Sch93} and
De Lellis-Sz\'{e}kelyhidi \cite{DeLS1,DeLS2}.
For numerical solutions, this means that LES
solutions could depend in an essential manner on the SGS models
or the equivalent in the form of modified numerical algorithms.
Stated more directly, the turbulent Schmidt and Prandtl numbers
of the models or the numerical Schmidt and Prantdl numbers of the
algorithms could influence the selection of the numerical LES solution.
Distinct but apparently converged solutions from distinct solutions
for the identical problem are thus a possibility. Moreover, the
LES solution regime is very sensitive to parameters (other than
turbulent or numerical fluid transport) which introduce a regularizing
length scale, and to the dimensionless ratios of these length
scales. The literature for multiphase flow, turbulent mixing and
turbulent chemistry has many dimensionless parameters. Those that
are sensitive (experimentally, for example) in a particular problem
can be expected to be also sensitive in the definition of the
SGS models and in their numerical manifestations as numerical
analogues of the SGS models. Our optimistic hope is that a well
designed LES algorithm should employ the SGS terms which eliminate
further nonuniqueness due to numerical or algorithmic issues, and
thus should yield a unique solution.

Assuming uniqueness of solutions has been achieved, within an
LES regime, what does convergence mean? Convergence of the velocity is
strong (norm) in an $L^q$ space. This conventional notion
of convergence requires no further discussion. For the passive scalars, the
convergence is weak-star, meaning that convergence holds for
any test function (observable) in the dual space $(L^\infty)^*$.
This dual space is a space of signed measures. The
measures and the abstract space on which they
are defined can be realized concretely in the form of
the space-time dependent Young's measures. In language more familiar
to physicists and applied mathematicians, the Young measures represent
the space-time dependent probability distribution function (pdf)
of limiting values of the $\chi_i$. Although the limit also yields
a classical weak solution, we expect that the pdf description will
prove to be more useful. The reason for this claim is that nonlinear functions
of the solution can pass to the limit in the pdf--Young measure formulation
but not in the weak star limit formulation. The classical weak
solution is the mean value of the Young measure solution, both regarded
as space-time dependent distributions. The classical weak
solution has far less information than the Young measure.
Thus we state that the
limiting concentrations $\chi_i$ are pdfs. Clearly, this is a modification
of standard ways of judging convergence, but now with a basis in mathematical
theory.

Is the Young measure limit a result of a weak mathematical technology, to be
improved at some future date? Within the generality of Theorem 6.1, this seems
unlikely. That theorem allows arbitrary and even infinite Schmidt numbers,
meaning that sharp concentration discontinuities can persist for all time.
Reasoning theoretically, the essence of the Kolmogorov theory is that
smaller and smaller vortices persist at all length scales, in the absence
of viscosity. Thus the sharp interfaces must become ever more convoluted.
Numerical evidence \cite{LimYuGli07,LimYuGli08} supports this view,
in which unregularized simulations
display turbulent mixing concentration interfaces proportional to the
available mesh degrees of freedom, uniformly under mesh refinement.
More useful is to ask what can be expected in the case of finite, bounded
Schmidt numbers. According to an analysis in \cite{MY}, the
species diffusion has a logarithmetic singularity at high Schmidt number.
But this argument omits time scales. A theoretical model \cite{LimYuGli08},
which does include time scales as well as SGS terms to modify the equations,
suggests a rapid or instantaneous smoothing of
species concentration discontinuities, but at length scales that still
may lead to Young measure solutions, i.e., convergence to a pdf.
In view of the importance of the nature of this convergence,
further studies, both numerical and theoretical, are called for.

Finally, we note that our convergence framework stops at the level
of passive scalar fields. Compressible Euler existence theories beyond this level
are known only at the level of measure-valued solutions and even
there at the level of convergent subsequences, with no proof that
the limit is actually a solution of the original equations. Because
the concentrations are already described by this measure-valued framework,
it is plausible that
such a radical change in the convergence framework is required to
move from passive scalars to true multiphase flow.
Beyond the physical issues involved in the
extension of \textit{Assumption (K41w)} to variable and species dependent
viscosity and density, the mathematical issue is the proof of uniform
estimates for partial differential equations whose coefficients are converging
to Young measures (pdfs).

\bigskip
\smallskip
{\bf Acknowledgments.}
The authors thank Peter Constantin for helpful discussions and suggestions.
Gui-Qiang Chen's research was supported in
part by the National Science Foundation under Grants DMS-0935967,
DMS-0807551, the Natural Science Foundation of China under Grant NSFC-10728101,
the Royal Society-Wolfson
Research Merit Award (UK), and the UK EPSRC Science and Innovation
Award to the Oxford Centre for Nonlinear PDE (EP/E035027/1).
James Glimm's research was supported in part by
the U.S. Department of Energy
grants DE-FC02-06-ER25770, DE-FG07-07ID14889, DE-FC52-08NA28614,
and DE-AC07-05ID14517 and
by the Army Research Organization grant
W911NF0910306.
This manuscript has been co-authored by Brookhaven Science
Associates, LLC, under Contract No. DE-AC02-98CH1-886 with
the U.S. Department of Energy. The United States Government retains,
and the publisher, by accepting this article for publication,
acknowledges, a world-wide license to publish or reproduce the published
form of this manuscript, or allow others to do so, for the United States
Government purposes.


\begin{thebibliography}{999}


\bibitem{Ball} J. Ball,  \, {\em A version of the fundamental theorem of Young
measures}, In: {PDEs and Continuum Models of Phase Transitions},
pp. 207--215, Lecture Notes of Physics, \textbf{344}, Springer-Verlag, 1989.

\bibitem{CCFS}
A Cheskidov, P Constantin, S Friedlander, and R. Shvydkoy,
{\em Energy conservation and Onsager¡¯s conjecture for the
Euler equations}, Nonlinearity, \textbf{21} (2008), 1233--1252.

\bibitem{Const90}
P. Constantin,
{\em Navier-Stokes equations and area of interfaces},
Commun. Math. Phys. \textbf{129} (1990), 241--266.

\bibitem{Const}
P. Constantin,
{\em On the Euler equations of incompressible fluids},
Bull. Amer. Math. Soc. \textbf{44} (2007), 603--621.

\bibitem{CD}
P. Constantin and C. Doering,
Variational bounds on dissipative systems. Physica D \textbf{82} (1995),
221--228; Variational bounds on energy dissipation in incompressible
flows: II. Channel flow. Phys. Rev. E \textbf{51} (1995), 3192--3198.


\bibitem{DeLS1} {C. De Leliss and L. Sz\'{e}kelyhidi Jr.}
{\em The Euler equations as a differential inclusion}.
Ann. of Math. (2), {\bf 170} (2009), 1417--1436.

\bibitem{DeLS2} {C. De Leliss and L. Sz\'{e}kelyhidi Jr.}
{\em On admissibility criteria for weak solutions of the
Euler equations}.  Arch. Ration. Mech. Anal. {\bf 195} (2010),
225--260.

\bibitem{DF}  {C.~R. Doering and C. Foias},
{\em Energy dissipation in body-forced durbulence},
J. Fluid Mech. {\bf 467} (2002), 289--306.

\bibitem{FMRT}
C. Foias, O. Manley, R. Rosa, and R. Temam,
{\em Navier-Stokes Equations and Turbulence},
Cambridge University Press: Cambridge, 2001.

\bibitem{GW}
W.~Gangbo and M. Westdickenberg, {\em Optimal transport for the system of isentropic Euler
equations}.  Comm. Partial Differential Equations, {\bf 34} (2009), 1041--1073.


\bibitem{Hopf}
E. Hopf, {\em \"{U}ber die Anfangswertaufgabe f\"{u}r die hydrodynamischen Grundgleichungen}.
Math. Nachr.  {\bf 4} (1951), 213--231.


\bibitem{Kolmogorov1}  A.~N. Kolmogoroff,
{\em The local structure of turbulence in incompressible viscous fluid for very large
Reynold's numbers}.  C. R. (Doklady) Acad. Sci. URSS (N.S.) {\bf 30} (1941), 301--305.

\bibitem{Kolmogorov2}  A.~N. Kolmogoroff,
{\em Dissipation of energy in the locally isotropic turbulence}.
 C. R. (Doklady) Acad. Sci. URSS (N.S.) {\bf 32} (1941), 16--18.

\bibitem{Leray1}
J. Leray,
{\em Etude de diverses \'{e}quations int\'{e}grales nonlin\'{e}aires et
de queques probl\'{e}mes que pose l'hydrodynamique}.
J. Math. Pures Appl. {\bf 12} (1933), 1--82.

\bibitem{Leray2}
J. Leray,
{\em Essai sur les mouvements plans d'un liquide visqueux que limitent des parois}.
J. Math. Pures Appl. {\bf 13} (1934), 331--418.


\bibitem{Leray3}
J. Leray,
{\em Sur le mouvement d'un liquide visqueux emplissant l'espace}.
Acta Math. {\bf 63} (1934), 193--248.

\bibitem{LimYuGli07} H. Lim, Y. Yu, J. Glimm, X.-L. Li, and D.~H.  Sharp, \,
{\em Chaos, Transport, and Mesh Convergence for Fluid Mixing},
Acta Mathematicae Applicatae Sinica {\bf 24} (2008), 355-368.


\bibitem{LimYuGli08} H. Lim, Y. Yu, J. Glimm, X. L. Li, and D.~H. Sharp, \,
{\em Subgrid Models for Mass and Thermal Diffusion in Turbulent Mixing},
Physica Scripta, (2010), In press.


\bibitem{JLions} J.-L. Lions,
{\em Quelques M\'{e}thodes de  R\'{e}solution des Probl\'{e}mes aux Limites Nonlinea'{e}aires}.
Dunod, Paris, 1969.

\bibitem{PLions} P.-L. Lions,
{\em Mathematical Topics in Fluid Mechanics}.
Vol. {\bf 1}: Incompressible Models,
Oxford, 1996.


\bibitem{McComb} W.~D. McComb,
{\em The Physics of Fluid Turbulence},
Oxford University Press: Oxford, 1990.

\bibitem{Neustupa} J. Neustupa, {\em Measure-valued solutions of the Euler and
Navier-Stokes equations for compressible barotropic fluids},
Math. Nachr. {\bf 163} (1993), 217--227.


\bibitem{MY}
A.~S. Monin and A.~M. Yaglom,
{\em Statistical Fluid Mechanics: Mechanics of Turbulence},
Vol. {\bf I}--{\bf II}. Translated from the 1965 Russian original.
Edited and with a preface by John L. Lumley. English edition updated,
augmented and revised by the authors.
Dover Publications, Inc., Mineola, NY, 2007.

\bibitem{Onsager}
L. Onsager,
{\em Statisitical hydrodynamics}, Nuovo Cimento (Suppl.),
{\bf 6} (1949), 279--287.

\bibitem{Sch93}
V. Scheffer,
{\em An inviscid flow with compact support in space-time},
J. Geom. Anal. {\bf 3} (1993), 343--401.

\bibitem{Tartar}
L. Tartar, \,
{\em Compensated compactness and
applications to partial differential equations}, {Research
Notes in Mathematics, Nonlinear Analysis and Mechanics},
Herriot-Watt Symposium, Vol. \textbf{4}, Knops R.J. ed., Pitman Press,
1979.

\bibitem{Temam} R. Temam,
{\em Navier-Stokes Equations},
North-Holland, Amsterdam, 1977.

\end{thebibliography}
\end{document}